\def\End{\mathop{\rm End}\nolimits}
\def\Id{\mathop{\rm Id}\nolimits}
\def\Im{\mathop{\rm Im}\nolimits}
\def\Hom{\mathop{\rm Hom}\nolimits}
\def\Zb{{\mathbb Z}}
\def\Ac{{\cal A}}
\def\Bc{{\cal B}}
\def\a{\alpha}
\def\b{\beta}
\def\d{\delta}
\def\s{\sigma}
\def\t{\theta}
\def\vp{\varphi}
\def\0b{\bf 0}
\def\ot{\otimes}
\def\ra{\rightarrow}
\def\rt{\cdot}
\def\0D{\Delta^{(0)}}
\def\1D{\Delta^{(1)}}
\newtheorem{theorem}{Theorem}[section]
\newtheorem{remark}[theorem]{Remark}
\newtheorem{proposition}[theorem]{Proposition}
\newtheorem{lemma}[theorem]{Lemma}
\newtheorem{corollary}[theorem]{Corollary}
\newtheorem{example}[theorem]{Example}
\newtheorem{definition}[theorem]{Definition}
\def\ni{\noindent}
\def\build#1_#2^#3{\mathrel{
\mathop{\kern 0pt#1}\limits_{#2}^{#3}}}
\def\odots{\ot\cdots\ot}
 \newcommand{\ie}{{\it i.e.\/}\ }
\def\a{\alpha}
\def\b{\beta}
\def\d{\delta}
\def\s{\sigma}
\def\t{\theta}
\def\vp{\varphi}
\def\ot{\otimes}
\def\part{\partial}
\def\ra{\rightarrow}
\def\lra{\longrightarrow}
\def\text{\hbox}
\def\ot{\otimes}
\def\ra{\rightarrow}
\def\End{\mathop{\rm End}\nolimits}
\def\Hom{\mathop{\rm Hom}\nolimits}
\def\Id{\mathop{\rm Id}\nolimits}
\def\build#1_#2^#3{\mathrel{
\mathop{\kern 0pt#1}\limits_{#2}^{#3}}}
\numberwithin{equation}{section}
\begin{document}
\title{Cyclic homology for Hom-associative algebras}
\author{Mohammad Hassanzadeh, Ilya Shapiro\footnote{Research supported in part by an NSERC Discovery grant 406709}, Serkan S\"utl\"u}

\date{
}

\maketitle

\begin{abstract}
\ni In the present paper we investigate the noncommutative geometry of a class of algebras, called the Hom-associative algebras, whose associativity is twisted by a homomorphism. We define the Hochschild, cyclic, and periodic cyclic homology and cohomology for this class of algebras generalizing these theories from the associative to the Hom-associative setting.
\end{abstract}

\tableofcontents


\section{Introduction}

Starting with a Lie algebraic approach to non-commutative geometry \cite{Wulk96,Wulk99,Wulk99-II}, the guiding motivation behind a ``non-associative geometry" is to extend the non-commutative formalism of the spectral action principle \cite{ChamConnMarc07,ChamConn07,ChamConn08,ChamConn12} to a non-associative framework. More specifically, in an attempt to reformulate the Grand Unified Theories based on $SU(5)$, $SO(10)$ and $E_6$, it is pointed out in \cite{BoylFarn14}, and illustrated in \cite{BoylFarn13}, that in the ordinary approach to physics the basic input is a ``symmetry group", which is associative by nature, whereas in the spectral approach it is an ``algebra", which is not necessarily associative.

On the other hand, the study of the differential geometry of quantum groups \cite{BeggMaji06,MajiOeck99} showed the lack of an associative differential algebra structure on the standard quantum groups, and hence the need for a non-associative geometry (differential geometry with non-associative coordinate algebras) for a full understanding of the geometry of quantum groups. The first step in this direction was taken in \cite{AkraMaji04}, generalizing the twisted cyclic cohomology of \cite{KustMurpTuse03} to the setting of quasialgebras, in order to cover examples motivated by the Poisson geometry \cite{AlekKosmSchwMein02}.

In the present paper we take up a similar analysis from the point of view of a different class of possibly non-associative algebras, called Hom-associative algebras, with the goal of extending the ordinary cyclic homology and cohomology of associative algebras to the non-associative setting.

The Hom-associative algebras first appeared  in  contexts related to physics. The study of $q$-deformations,
based on  deformed derivatives, of Heisenberg algebras, Witt and Virasoro algebras, and the quantum conformal algebras reveals a generalized Lie algebra structure in which the Jacobi identity is deformed by a linear map. These algebras first appeared  in \cite{HartLarsSilv06,LarsSilv05} and they were called Hom-Lie algebras. The Hom-associative
algebras were first introduced  in \cite{MakhSilv08} and were developed in \cite{MakhSilv10,MakhSilv09,Yau09,MakhSilv10-II,Yau10,Yau12}. Briefly, a Hom-associative algebra $\Ac$ satisfies the usual algebra axioms with the associativity condition twisted by an algebra homomorphism $\a:\Ac\rightarrow\Ac$.  More precisely we have $$\a(a)(bc)=(ab)\a(c)$$ for all $a,b,c\in\Ac$.  Thus with $\a=\text{Id}$ we recover the associative algebras as a subclass.

With the goal of extending the formal deformation theory (introduced in \cite{Gers64} for associative algebras, and in \cite{NijeRich66} for Lie algebras) to Hom-associative and Hom-Lie algebras, a cohomology theory for Hom-associative algebras was introduced in \cite{MakhSilv10}. The first and the second cohomology groups of a Hom-associative algebra thus defined were adapted to the deformation theory of Hom-associative algebras, and generalized the Hochschild cohomology of an algebra with coefficients in the algebra itself.


The first and the second cohomology groups of \cite{MakhSilv10-II} were later analyzed more conceptually in \cite{AmmaEjbeMakh11} where the authors defined a Hochschild cohomology for Hom-associative algebras (generalizing the ordinary Hochschild cohomology of an algebra with coefficients in itself) with a Gerstenhaber bracket endowing the differential complex with a graded Lie algebra structure.


The purpose of the present paper is to extend the usual notions of cyclic homology and cohomology for associative algebras to the setting of Hom-associative algebras. The lack of associativity, as the first obstacle on the way to this extension, is partly overcome by restricting our scope to the multiplicative Hom-associative algebras. With this multiplicativity assumption, the presence or absence  of a unit plays a very important role: the classes of naturally occurring examples are very different in flavor. In particular, the multiplicative unital Hom-associative algebras are very close to being associative. On the other hand, in the absence of a unit, one cannot define the Connes' boundary map $B$ \cite{Conn85}, hence we do not have the $(b,B)$-complex interpretation of cyclic (co)homology. Similarly, since we cannot define (co)degeneracy operators, we cannot use the cyclic module approach of \cite{Conn83} to cyclic (co)homology. As a result, we focus only on defining the cyclic (co)homology as a (co)kernel of Hochschild cohomology \cite{Conn85}, and using the bicomplex approach of \cite{Tsyg83}.


This requires a discussion of the Hochschild cohomology with coefficients. We recall that the cyclic homology of an associative algebra $A$ is given by the coinvariants of the Hochschild homology of $A$ with coefficients in $A$ under the cyclic group action, whereas the cyclic cohomology of $A$ is computed by the cyclic invariants of the Hochshild cohomology of $A$ with coefficients in the dual space $A^\ast$. In the case of Hom-associative algebras, it is only the Hom-associative algebra $\Ac$ itself that has been considered as a coefficient space, by which a Hochschild cohomology theory was defined in \cite{AmmaEjbeMakh11}. In the present paper, on the other hand, we define a Hochschild homology theory that can admit the Hom-associative algebra $\Ac$ itself as coefficients, and a Hochschild cohomology theory that can admit $\Ac^\ast$ as a coefficient space.


A rather surprising fact in the Hom-associative setting is that for a Hom-associative algebra $\Ac$, the dual space $\Ac^\ast$ is not an $\Ac$-bimodule (in the sense of \cite[Def. 1.5]{MakhSilv10-II}) via the coregular action. Furthermore, modifying the coregular action by the homomorphism that twists the associativity does not fix this problem. One of the most natural options then is to impose further conditions on the Hom-associative algebra $\Ac$ so that $\Ac^\ast$ becomes an $\Ac$-bimodule. For instance, if $\a\in \End(\Ac)$ is an element of the centroid \cite{Herstein-book}, then $\Ac^\ast$ is an $\Ac$-bimodule. The other option is to introduce a variant of $\Ac^\ast$ as the coefficient space so that it becomes $\Ac^\ast$ in the case when $\Ac$ is associative. This can be achieved by defining
$$\Ac^\circ=\{f\in\Ac^\ast\mid f(x\a(y))=f(\a(xy))=f(\a(x)y)\}$$ as we discuss below. In the case of an associative algebra $A$, we recall from \cite{Conn85} that it is precisely the Hochschild cohomology $H^n(A,A^\ast)$ with coefficients in $A^\ast$ that is equal to the space of the de Rham currents of dimension $n$ when $A=C^\infty(M)$, the algebra of smooth functions on a compact smooth manifold $M$. Hence, in order to capture the correct geometric data, we unify all of these methods in a Hochschild cohomology theory for a Hom-associative algebra $\Ac$ with coefficients in a new object which we call a dual module.


The paper is organized as follows. In Section 2  we review the basics of Hom-associative algebras, in particular we study those having $\a$ in the centroid. We note for a Hom-associative algebra $\Ac$ that $\Ac^\ast$ is not necessarily an $\Ac$-bimodule, and thus we investigate the algebraic dual of a module over a Hom-associative algebra from the representation theory point of view. Finally, we recall the basics of Hochschild and cyclic homology and cohomology for associative algebras.
In Section 3 we introduce Hochschild homology of  $\Ac$ with coefficients in a bimodule $V$. Then for $V=\Ac$ we introduce the cyclic group action on the Hochschild complex, and define the cyclic homology of a multiplicative Hom-associative algebra $\Ac$. Finally, we use the bicomplex method to define the cyclic and the periodic homologies of $\Ac$, and we show that the equivalency of the two definitions. In section 4 we define the Hochschild cohomology of $\Ac$ with coefficients in a new object $V$ which we call a dual module. We define the cyclic group action on the Hochschild complex of $\Ac$, with coefficients in the dual module $\Ac^*$, and hence the cyclic cohomology of $\Ac$. Similar to the homology case, we use the bicomplex method to define the cyclic and periodic cyclic cohomologies of $\Ac$, and we show that the two definitions agree.

\medskip
\textbf{Notation}: Throughout the paper all algebras are over a field.  We reserve the font $A$ for associative algebras whereas for Hom-associative algebras we use $\Ac$. All tensor products are over the field of the algebra in question.

\section{Preliminaries}

\subsection{Hom-associative algebras}

In this subsection we recall the definition of a Hom-associative algebra, and in addition to basic examples, we study characterization results on the unitality and the embedding properties of Hom-associative algebras.


Hom-associative algebra structures were introduced recently in \cite{MakhSilv10}, and then the theory was developed further to include Hom-coalgebras, Hom-bialgebras and Hom-Hopf algebras in \cite{MakhSilv09,Yau09,MakhSilv10,Yau10}, and even Hom-quantum groups in \cite{Yau12}. We also refer to \cite{HartLarsSilv06,MakhSilv09} for the Lie-counterpart of the theory.


Let us recall from \cite{Yau09} that a Hom-associative algebra is a triple $(\Ac, \mu, \alpha)$ consisting of a vector space $\Ac$ over a field $k$, and $k$-linear maps $\mu: \Ac\ot \Ac\lra \Ac$ that we denote by $\mu(a,b)=:ab$, and $\alpha: \Ac\longrightarrow \Ac$ satisfying the Hom-associativity condition
\begin{equation}\label{aux-Hom-assoc-cond}
\a(a)(bc)=(ab)\alpha(c),
\end{equation}
for any $a,b,c\in \Ac$. Hom-associative algebras generalize associative algebras in the sense that any associative algebra $\Ac$ is a Hom-associative algebra with $\alpha:=\Id_\Ac$, the identity map. A Hom-associative algebra $\Ac$ is called multiplicative if for any $a,b\in\Ac$
\begin{equation*}
\alpha(ab) = \a(a)\a(b).
\end{equation*}
\begin{example}\label{twist}
{\rm
Let $\Ac$ be any associative algebra with multiplication $\mu:\Ac\ot \Ac\lra \Ac$, and let $\alpha: \Ac\longrightarrow \Ac$ be an algebra map. Then for $\mu_\a=\alpha \circ \mu:\Ac\lra\Ac$, the triple  $(\Ac, \mu_{\alpha}, \alpha)$ is a multiplicative Hom-associative algebra \cite{Yau09}.
}
\end{example}

Throughout the paper, by a Hom-associative algebra we mean a multiplicative Hom-associative algebra unless otherwise is stated.


A Hom-associative algebra $\Ac$ is called unital if there is an element $1\in \Ac$ such that $1a=a=a1$. Unital Hom-associative algebras first appeared  in \cite{FregGohr09}, see also \cite{Gohr10}, and a classification of unital Hom-associative algebras is given in \cite{FregGohr09}. If a Hom-associative algebra $\Ac$ is unital, then for any $a,b\in\Ac$,
\begin{equation}\label{aux-almost-unital-Hom-alg}
\a(a)b=a\a(b)=\a(ab).
\end{equation}
It follows from the multiplicativity of $\Ac$ that $$\a(a)b=a\a(b)=\a(ab)=\a(a)\a(b)=\a^2(a)b,$$ from which we conclude, by plugging in $b=1$, that
\begin{equation*}
\a = \a^2.
\end{equation*}

More precisely, we have the following characterization result.

\begin{lemma}\label{lemma-unital}
Let $(\Ac,\mu,\alpha,1)$ be a unital Hom-associative algebra. Then $\Ac\cong A_1\oplus A_2$ as algebras, where $A_1$ is a unital associative algebra, and $A_2$ is a unital (not necessarily associative) algebra.  Furthermore, $\a:\Ac\lra \Ac$ is given by $\alpha(a_1+a_2)=a_1.$  Conversely, for any unital associative algebra $A_1$ and a unital (not necessarily associative) algebra $A_2$, $A_1\oplus A_2$ is a unital Hom-associative algebra with $\alpha:A_1\oplus A_2\lra A_1\oplus A_2$ being the projection onto $A_1$.
\end{lemma}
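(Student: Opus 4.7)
The plan is to leverage the identity $\alpha^2=\alpha$, already established in the paragraph preceding the lemma, to split $\Ac$ into the image and kernel of $\alpha$, and then upgrade this vector space decomposition to a direct sum of algebras with the stated structure.

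First I would set $A_1:=\Im(\alpha)$ and $A_2:=\Ker(\alpha)$; since $\alpha$ is idempotent, $\Ac=A_1\oplus A_2$ as vector spaces and $\alpha$ is visibly the projection onto $A_1$ along $A_2$, yielding the formula $\alpha(a_1+a_2)=a_1$. Multiplicativity of $\alpha$ makes each summand closed under the product. To promote this to a direct sum of algebras I would verify $A_1\cdot A_2=A_2\cdot A_1=0$: for $a_1=\alpha(a)\in A_1$ and $a_2\in A_2$ the identity \eqref{aux-almost-unital-Hom-alg} gives $a_1 a_2=\alpha(a)a_2=a\alpha(a_2)=0$, and the symmetric computation handles the other side.

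Next I would show that $A_1$ is associative. For $x,y,z\in A_1$ we have $\alpha(x)=x$ and $\alpha(z)=z$, so the Hom-associativity axiom $\alpha(x)(yz)=(xy)\alpha(z)$ collapses to the usual $x(yz)=(xy)z$. For the units, I would put $e:=\alpha(1)\in A_1$ and write $1=e+(1-e)$; because $\alpha(1-e)=\alpha(1)-\alpha^2(1)=0$, the element $1-e$ lies in $A_2$. Applying \eqref{aux-almost-unital-Hom-alg} with $a=1$ yields $\alpha(1)b=\alpha(b)$, so $e$ acts as $\alpha$, and hence as the identity on $A_1$; for $x\in A_2$ the same identity gives $ex=\alpha(x)=0$, and therefore $1\cdot x=ex+(1-e)x=x$ forces $(1-e)x=x$, with the symmetric check on the right. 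Thus $1-e$ is the unit of $A_2$.

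The converse direction is essentially verification: given a unital associative $A_1$ and a unital (possibly non-associative) $A_2$, I would equip $\Ac=A_1\oplus A_2$ with componentwise multiplication and declare $\alpha$ to be the projection onto $A_1$. Multiplicativity of $\alpha$ is immediate, Hom-associativity reduces on both sides to the associator of three elements of $A_1$ (all mixed terms vanish by construction), and $1_{A_1}+1_{A_2}$ serves as the unit. The whole argument is bookkeeping around the idempotency of $\alpha$ and I do not expect a serious obstacle; the one mildly subtle point is recognizing that $e=\alpha(1)$ and $1-e$ are the correct candidates for the two units and that \eqref{aux-almost-unital-Hom-alg} is precisely what one needs to verify this.
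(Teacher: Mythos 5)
Your proof is correct, and it reaches the same decomposition as the paper's but by a more self-contained route. The paper invokes the result of \cite{FregGohr09} that for a unital Hom-associative algebra $\a$ is left multiplication $L_x$ by a central element $x$, deduces $x^2=x$ from multiplicativity, and splits $\Ac=\Ac x\oplus \Ac y$ via the orthogonal central idempotents $x$ and $y=1-x$; you instead work directly from the idempotency $\a^2=\a$ (already established just before the lemma) and the identity \eqref{aux-almost-unital-Hom-alg}, splitting $\Ac=\Im(\a)\oplus\Ker(\a)$. Since $x=\a(1)$ and $\Ac x=\Im(\a)$, $\Ac(1-x)=\Ker(\a)$, the two decompositions coincide, and your verification of associativity of $A_1$ via $\a(x)(yz)=(xy)\a(z)$ with $\a$ fixing $A_1$ pointwise is the same computation the paper performs in the $L_x$ notation. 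What your version buys is independence from the external characterization $\a=L_x$: everything is derived from facts the paper has already proved, and you make explicit two points the paper leaves implicit, namely that the cross products $A_1\cdot A_2$ and $A_2\cdot A_1$ vanish (which is what upgrades the vector-space splitting to an algebra direct sum) and the identification of $e=\a(1)$ and $1-e$ as the respective units. The paper's version, in exchange, records the extra structural fact that $\a$ is multiplication by a central idempotent. Both treatments of the converse are the same routine verification.
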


\begin{proof}
It is shown in \cite{FregGohr09} that for a unital, Hom-associative algebra $\Ac$, the map $\alpha:\Ac\lra \Ac$ is given by the left multiplication $L_x:\Ac\lra \Ac$, $a\mapsto xa$ with a central element $x\in \Ac$. We then conclude, using the multiplicativity $x(ab)=(xa)(xb)$, that $x^2=x$. Letting $y=1-x$, we observe that $y^2=y$, that is $y\in \Ac$ is also central, and $\Ac\cong \Ac x\oplus \Ac y$ as algebras. We note that $A_1:=\Ac x$ and $A_2:=\Ac y$ are both unital with units $x\in \Ac x$ and $y\in \Ac y$ respectively. Finally $\alpha$-associativity of $\Ac$ implies the associativity of $A_1$, since for $xa, xb, xc\in A_1$ we have $(xa)(xbxc)=xa(bc)=\alpha(a)(bc)=(ab)\alpha(c)=(ab)xc=(xaxb)(xc)$, and $\a(xa+yb)=x(xa+yb)=xa$, that is $\a:\Ac\lra \Ac$ is the projection onto $A_1$. The converse statement, on the other hand, is straightforward.
\end{proof}

\begin{example}{\rm
Let $A$ be an associative algebra, and $\a:A\lra A$ an algebra endomorphism.  Assume that $\alpha^2=\a$.  Let $x\star y=\a(xy)$, so that $(\Ac,\star,\a)$ is a multiplicative Hom-associative algebra.  Then as algebras $(\Ac,\star,\a)=K\oplus B$ where the multiplication on $K$ is $0$, the multiplication on $B$ is associative, and the associativity endomorphism is the projection $proj_B$ onto $B$. To see this we let $K=ker(\a)$, so that $K$ is an ideal of $\Ac$, and let $B=\Im(\a)$.  Then as vector spaces $\Ac=K\oplus B$ and the associativity endomorphism is $proj_B$.  Since $K$ is an ideal, $(k+b)\star (k'+b')=bb'$.
}
\end{example}

\begin{example}\label{unital-example}
{\rm
Let $\Ac$ be a two dimensional vector space over a field $k$ with a basis $\{e_1, e_2\}$. Let the multiplication $\mu: \Ac\ot \Ac\longrightarrow \Ac$ be given by
\begin{equation*}
e_ie_j = \left\{\begin{array}{cc}
                    e_1, & {\rm if}\,\,(i, j)= (1,1) \\
                    e_2 & {\rm if}\,\,(i, j)\neq (1,1).
                  \end{array}
\right.
\end{equation*}
Then via the map
\begin{equation*}
\alpha:\Ac\lra \Ac,\qquad \alpha(e_1)=e_1-e_2,\quad \alpha(e_2)=0,
\end{equation*}
the triple $(\Ac, \mu, \alpha)$ is a Hom-associative algebra with the unit $1:=e_1$. Furthermore we have $\a^2=\a$.  In view of the Lemma \ref{lemma-unital} we see that $\Ac=k(e_1-e_2)\oplus ke_2$.
}
\end{example}

We will need the notion of a bimodule over a Hom-associative algebra to serve as a coefficient space for the Hochschild homology and cohomology. Hence we recall it from \cite[Def. 1.5]{MakhSilv10}.

\begin{definition}
Let $(\Ac,\mu,\a)$ be a Hom-associative algebra. Then a linear space $V$ equipped with $\rt:\Ac\ot V \lra V$, $a\ot v\mapsto a\rt v$, and $\b:V\lra V$, is called a left $\Ac$-module if the diagram
\begin{align*}
\xymatrix{
\Ac\ot \Ac \ot V \ar[r]^{\,\,\,\,\,\mu\ot \b} \ar[d]_{\a\ot\, \rt} & A\ot V \ar[d]^{\rt}\\
\Ac\ot V \ar[r]_{\rt} & V
}
\end{align*}
is commutative, i.e.,
\begin{equation}\label{aux-Hom-module}
(ab)\cdot \b(v) = \a(a)\cdot (b\cdot v),
\end{equation}
for any $a,b\in \Ac$ and any $v\in V$.
\end{definition}

Similarly, $(V,\b)$ is called a right $\Ac$-module if
\begin{equation*}
\beta(v)\cdot (ab)= (v\cdot a)\cdot \a(b).
\end{equation*}

\begin{example}\label{ex-A-is-A-module}
{\rm
Any Hom-associative algebra $(\Ac,\mu,\a)$ is both a left and a right module over itself by $\beta=\a$, \cite[Rk. 1.6]{MakhSilv10}.
}
\end{example}

In order for defining the cyclic cohomology theory for Hom-associative algebras, we shall need the cyclic invariant subcomplex of the Hochschild complex of $\Ac$, with coefficients in $\Ac^\ast$. However, given a Hom-associative algebra $\Ac$, the algebraic dual $\Ac^\ast$ is not necessarily an $\Ac$-module via the coregular actions, $(a\cdot f)(b)=f(ba)$ or $(f\cdot a)(b)=f(ab)$, or their $\a$-twisted versions $(a\cdot f)(b)=f(b\a(a))$ or $(f\cdot a)(b)=f(\a(a)b)$. Then one is forced either to restrict $\Ac^\ast$ into a subspace, that we denote by $\Ac^\circ$ below, which can be an $\Ac$-bimodule, or impose further conditions on $\Ac$ so that $\Ac^\ast$ becomes an $\Ac$-bimodule. In the former case we have the following.

\begin{lemma}
Given a Hom-associative algebra $(\Ac,\mu,\a)$, the pair $(\Ac^\circ,\Id_{\Ac^\ast})$ where
\begin{equation}\label{aux-A-circ}
\Ac^\circ=\{f\in\Ac^\ast\mid f(x\a(y))=f(\a(xy))=f(\a(x)y)\},
\end{equation}
is a left $\Ac$-module via
\begin{equation*}
(a\cdot f)(b)=f(b\a(a)),
\end{equation*}
for any $a,b\in\Ac$, and any $f\in \Ac^\circ$.
\end{lemma}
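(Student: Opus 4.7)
The plan is to check the two things that make $(\Ac^\circ,\Id_{\Ac^\ast})$ a left $\Ac$-module: that the action is well-defined on $\Ac^\circ$ (i.e.\ $a\cdot f\in\Ac^\circ$) and that the left-module axiom $(ab)\cdot f=\a(a)\cdot(b\cdot f)$ holds for all $a,b\in\Ac$.

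First I would unpack the defining relations of $\Ac^\circ$. Using multiplicativity $\a(xy)=\a(x)\a(y)$, the identity $f(x\a(y))=f(\a(xy))=f(\a(x)y)$ becomes the two invariances
\[
f\bigl((x-\a(x))\a(y)\bigr)=0,\qquad f\bigl(\a(x)(y-\a(y))\bigr)=0,
\]
which I abbreviate (I) and (II). Combining (I)--(II) yields the swap $f(\a(x)y)=f(x\a(y))$; iterating gives the crucial flexibility rule that $f(\a^i(P)\a^j(Q))=f(\a(P)\a(Q))$ whenever $i+j\ge 1$, and more generally that once one factor of a binary product lies in $\Im(\a)$, $f$ of that product is invariant under applying arbitrary additional $\a$-powers to either factor.

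For the closedness of the action, I would evaluate $(a\cdot f)$ on the three test vectors $x\a(y)$, $\a(xy)$, and $\a(x)y$. Each gives $f$ of a ternary expression of the form $(\cdot)\a(a)$; Hom-associativity $(pq)\a(r)=\a(p)(qr)$ converts these to $f(\a(x)(\a(y)a))$, $f(\a^2(x)(\a(y)a))$, and $f(\a^2(x)(ya))$ respectively. The first two are immediately equal by the swap. For the third, I apply (II) (and Hom-associativity) iteratively to reach a common normal form such as $f(\a(x)(\a^3(y)\a^2(a)))$, which all three expressions attain after a sequence of moves of types $(p,q)\mapsto(p+1,q+1)$ (from (II)) and $(p,q)\mapsto(p+2,q+1)$ (from Hom-associativity combined with (I)) on the $\a$-powers in the inner pair.

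For the module axiom, evaluating both sides at $c\in\Ac$ gives
\[
((ab)\cdot f)(c)=f\bigl(c\a(ab)\bigr),\qquad (\a(a)\cdot(b\cdot f))(c)=f\bigl((c\a^2(a))\a(b)\bigr).
\]
On the LHS I apply (I) to pull $\a$'s onto $c$, use multiplicativity $\a(ab)=\a(a)\a(b)$, then Hom-associativity to obtain $f((\a^{M-1}(c)\a^N(a))\a^{N+1}(b))$ valid for any $M,N\ge 1$. On the RHS, (I) applied to the outer product followed by multiplicativity gives $f((\a^M(c)\a^{M+2}(a))\a^N(b))$ for any $M,N\ge 1$. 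Setting $M=2$ on the LHS and $N=3$, and $M=1$, $N=4$ on the RHS, both sides become $f((\a(c)\a^3(a))\a^4(b))$, so they coincide.

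The main obstacle is the bookkeeping of $\a$-powers and parenthesizations: because $\Ac$ is not associative, every use of Hom-associativity alters the grouping, and one must keep careful track of which factors lie in $\Im(\a)$ in order to apply (I) or (II). Once the normal-form strategy---reach a common canonical form by iterating the three moves (I), (II), and Hom-associativity---is identified, each verification reduces to a finite sequence of substitutions.
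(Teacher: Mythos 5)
Your proposal is correct and follows essentially the same route as the paper: a direct verification that $a\cdot f\in\Ac^\circ$ followed by the left-module axiom, using only Hom-associativity, multiplicativity, and the two defining relations of $\Ac^\circ$. The paper writes out explicit equality chains where you instead package the same manipulations into a normal-form calculus of $\a$-powers (your rules (I), (II) and the flexibility rule); I checked that your claimed normal form $f(\a(x)(\a^3(y)\a^2(a)))$ is reachable from all three test expressions and that your choices of $M,N$ in the module axiom do make the two sides coincide, so the plan executes.
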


\begin{proof}
For $a\in\Ac$ and $f\in \Ac^\circ$, we verify first that $a\cdot f\in \Ac^\circ$. Indeed,
\begin{align*}
 a\cdot f (x\a(y)) &= f((x\a(y))\a(a))=f(\a(x)(\a(y)a))=f(\a(x(\a(y)a)))\\
&=f(\a(x)\a(\a(y)a))=f(\a^2(x)(\a(y)a))\\&=f((\a(x)\a(y))\a(a)) = a\cdot f (\a(xy)).
\end{align*}
Similarly,
\begin{align*}
 a\cdot f (x\a(y)) &=f(\a(xy)\a(a)) = f((xy)\a(a))=f(\a(x)(ya))=f(\a(x)\a(ya))\\
&= f(\a^2(x)(ya))=f((\a(x)y)\a(a))=a\cdot f(\a(x)y).
\end{align*}
As for the left module condition, we have
\begin{align*}
(ab)\cdot f (x) &= f(x\a(ab))=f(\a(x)(ab))=f((xa)\a(b))=(b\cdot f)(\a(xa))\\
&= (b\cdot f)(x\a^2(a))=\a(a)\cdot (b\cdot f)(x).
\end{align*}

\end{proof}

In the same fashion, $\Ac^\circ$ is a right $\Ac$-module by $(f\cdot a)(b)=f(\a(a)b)$. In comparison to $\Ac^\ast$, we note that in case $\Ac$ is unital, or is associative, then $\Ac^\circ=\Ac^*$.

We shall also need the notion of a bimodule for the cohomological considerations.

\begin{definition}
Let $(\Ac,\mu,\a)$ be a Hom-associative algebra, and $(V,\b)$ be a left and a right $\Ac$-module. Then $V$ is called an $\Ac$-bimodule if
\begin{equation}\label{aux-A-bimodule}
\a(a)\cdot (v \cdot b) = (a \cdot v)\cdot \a(b)
\end{equation}
for any $a,b\in \Ac$, and any $v \in V$.
\end{definition}

\begin{example}\label{ex-A-is-A-bimodule}
{\rm
Any Hom-associative algebra $(\Ac,\mu,\a)$, the pair $(\Ac,\a)$ is a bimodule over itself.
}
\end{example}

\begin{proposition}
Given a Hom-associative algebra $(\Ac,\mu,\a)$, the subspace $\Ac^\circ\subseteq \Ac^\ast$ is an $\Ac$-bimodule.
\end{proposition}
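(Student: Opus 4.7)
The plan is to verify the bimodule compatibility condition \eqref{aux-A-bimodule}, namely
\[
\a(a)\cdot(f\cdot b) = (a\cdot f)\cdot \a(b)
\]
for all $a,b\in\Ac$ and $f\in\Ac^\circ$. The right $\Ac$-module structure on $\Ac^\circ$ is asserted in the sentence following the preceding lemma and is verified in exactly the same fashion as the left module case: one first checks that $f\cdot a\in\Ac^\circ$, by expanding each of the three defining equalities of $\Ac^\circ$ applied to $f\cdot a$ and reducing them to one another via Hom-associativity and multiplicativity of $\a$, and then confirms the right module axiom by the same style of rearrangement.

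The bimodule condition is then verified by evaluating both sides at an arbitrary $x\in\Ac$. Unwinding the action formulas and applying Hom-associativity once on the left, one computes
\begin{align*}
\bigl(\a(a)\cdot(f\cdot b)\bigr)(x) &= (f\cdot b)(x\a^2(a)) = f(\a(b)(x\a^2(a))) = f((bx)\a^2(a)),\\
\bigl((a\cdot f)\cdot\a(b)\bigr)(x) &= (a\cdot f)(\a^2(b)\,x) = f((\a^2(b)\,x)\a(a)).
\end{align*}
The aim is now to collapse both expressions to a common value. For the left-hand side, taking $u=bx$ and $v=\a(a)$ in the $\Ac^\circ$ identity $f(u\a(v))=f(\a(u)v)$ and invoking multiplicativity gives $f(\a(bx)\a(a))$. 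For the right-hand side, Hom-associativity rewrites the expression as $f(\a^3(b)(xa))$; one application of the $\Ac^\circ$ identity (combined with multiplicativity) converts this into $f(\a^2(b)(\a(x)\a(a)))$; and a final use of Hom-associativity collapses this to $f((\a(b)\a(x))\a(a))=f(\a(bx)\a(a))$, matching the left-hand side.

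The main obstacle is purely bookkeeping: at each step one must decide between Hom-associativity (to slide an $\a$ between the outer and inner factors of a product), multiplicativity (to push $\a$ through a product), and the $\Ac^\circ$ identity (to transfer an $\a$ between the two factors of the argument of $f$). The guiding observation is that after the initial expansion the two sides differ only in how many $\a$'s decorate each factor, which is precisely the discrepancy that the defining condition of $\Ac^\circ$ is designed to absorb; ensuring that the manipulations on the two sides meet at a common normal form is the crux of the calculation.
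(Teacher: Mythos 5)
Your overall strategy is the same as the paper's: evaluate both sides of the bimodule identity at an arbitrary $x\in\Ac$ and reduce them to a common normal form by alternating Hom-associativity, multiplicativity, and the defining identity of $\Ac^\circ$. The initial expansions of both sides are correct. However, two of your applications of Hom-associativity are off by one power of $\a$, and as written those steps are false. Hom-associativity reads $\a(u)(vw)=(uv)\a(w)$, so the last factor \emph{gains} an $\a$ when it is moved outside the inner product: $\a(b)(x\a^2(a))=(bx)\a^3(a)$, not $(bx)\a^2(a)$, and likewise $\a^2(b)(\a(x)\a(a))=(\a(b)\a(x))\a^2(a)$, not $(\a(b)\a(x))\a(a)$. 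Because the two slips occur symmetrically, one in each of your two chains, the chains still appear to meet --- but they meet at $f(\a(bx)\a(a))$, which need not equal the true common value of the two sides, so the computation as written does not establish the identity.

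The good news is that correcting the exponents repairs the argument completely, and the two sides genuinely meet at $f(\a(bx)\a^2(a))$: on the left, $f(\a(b)(x\a^2(a)))=f((bx)\a^3(a))=f(\a(bx)\a^2(a))$, using $f(u\a(v))=f(\a(u)v)$ once; on the right, $f((\a^2(b)x)\a(a))=f(\a^3(b)(xa))=f(\a^2(b)\a(xa))=f(\a^2(b)(\a(x)\a(a)))=f((\a(b)\a(x))\a^2(a))=f(\a(bx)\a^2(a))$. So the proof is reparable by purely local fixes and, once repaired, follows the same route as the computation in the paper (indeed your corrected chain is arguably the cleaner of the two). One further point: you defer the verification that $(\Ac^\circ,\,(f\cdot a)(b)=f(\a(a)b))$ is a right $\Ac$-module to an ``exactly the same fashion'' remark; this is also how the paper handles it, but in a self-contained write-up you should at least record that $f\cdot a\in\Ac^\circ$ and the right-module axiom hold, since the bimodule identity you verify presupposes both structures.
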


\begin{proof}
We have already seen that $A^{\circ}$ is a left and right $\Ac$-module. Let us show that the bimodule compatibility \eqref{aux-A-bimodule} is satisfied. Indeed, for any $a,b,x\in\Ac$, and any $f\in \Ac^\circ$,
\begin{align*}
\a(a)\cdot (f\cdot b) (x) &= f(\a(b)(x\a^2(a)))=f(\a(b)\a(x\a^2(a)))=f(\a^2(b)(x\a^2(a)))\\
&= f(\a^2(b)\a(x\a^2(a))) = f(\a^3(b)(x\a^2(a))) = f((\a^2(b)x)\a^3(a))\\
&= f(\a(\a^2(b)x)\a^2(a))=f((\a^2(b)x)\a(a))= ((a\cdot f)\cdot \a(b))(x).
\end{align*}
\end{proof}

In order for $\Ac^\ast$ to be an $\Ac$-bimodule, a second option is to impose further conditions on $\Ac$. To this end, we need the following class of Hom-associative algebras.
For a not necessarily multiplicative Hom-associative algebra $(\Ac,\mu,\a)$, we say $\a\in\End(\Ac)$ is an element of the centroid if
\begin{equation}\label{aux-unital-Hom-alg}
  \a(x)y=x\a(y)=\a(xy),
\end{equation}
for any $x,y\in \Ac$. We refer the reader to \cite{Herstein-book} for more information on the centroid of a ring, and \cite{BenaMakh14} for the centroid of a Lie algebra in the concept of Hom-Lie algebras.
We note that for any unital Hom-associative algebra $(\Ac,\a)$, $\a\in\End(\Ac)$ is an element of the centroid. On the other hand, if for a Hom-associative algebra $(\Ac,\a)$ the mapping $\a\in\End(\Ac)$ is an element of the centroid and $\a(1)=1$, then $\a=\Id$, and hence $\Ac$ is associative.

\begin{proposition}
Let $(\Ac,\mu,\a)$ be a multiplicative  Hom-associative algebra where $\a$ is an element of the centroid. Then $(\Ac^*,\a^\ast)$ is an  $\Ac$-bimodule via the coregular actions.
\end{proposition}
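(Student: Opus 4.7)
The plan is to verify, in order, that $(\Ac^*,\a^*)$ is a left $\Ac$-module under the coregular action $(a\cdot f)(b)=f(ba)$, a right $\Ac$-module under $(f\cdot a)(b)=f(ab)$, and that the two actions satisfy the bimodule compatibility \eqref{aux-A-bimodule}. In each case I would evaluate the identity to be checked on an arbitrary test vector $x\in\Ac$ and reduce it to an equality of the form $f(P)=f(Q)$ for explicit $P,Q\in\Ac$; the task then becomes an elementwise identity $P=Q$ (or at least $\a(P)=\a(Q)$, which suffices once $\a^*$ is involved).

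The three tools at my disposal are multiplicativity $\a(uv)=\a(u)\a(v)$, Hom-associativity $\a(u)(vw)=(uv)\a(w)$, and the centroid identity $\a(uv)=\a(u)v=u\a(v)$. The last of these is the decisive one: it allows $\a$ to migrate freely across any single product, so that every $\a$ appearing in a given expression can be absorbed or extracted at will. As a representative calculation, for the left module axiom
\begin{equation*}
((ab)\cdot \a^*(f))(x)=f(\a(x(ab))),\qquad (\a(a)\cdot(b\cdot f))(x)=f((x\a(a))b),
\end{equation*}
one reads off the chain
\begin{equation*}
\a(x(ab))=\a(x)(ab)=(xa)\a(b)=\a(xa)b=(x\a(a))b,
\end{equation*}
where the first, third, and fourth equalities are instances of the centroid identity and the second is Hom-associativity. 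The right module axiom is verified by the analogous chain $\a((ab)x)=\a(ab)x=(ab)\a(x)=\a(a)(bx)=a\a(bx)=a(\a(b)x)$, using centroid at the outer steps and Hom-associativity in the middle.

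For the bimodule compatibility, evaluating yields
\begin{equation*}
(\a(a)\cdot(f\cdot b))(x)=f(b(x\a(a))),\qquad ((a\cdot f)\cdot\a(b))(x)=f((\a(b)x)a),
\end{equation*}
and one computes $\a(b(x\a(a)))=\a(b)(x\a(a))=(bx)\a(\a(a))$ after a second use of centroid moves $\a$ onto $a$, then compares with $\a((\a(b)x)a)=(\a(b)x)\a(a)=\a((bx)a)$ via a symmetric manipulation; Hom-associativity $\a(b)(xa)=(bx)\a(a)$ bridges the two sides, which after reconciling the $\a$'s through the centroid identity delivers the desired equality under $f$.

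I do not expect a genuine obstacle. The centroid hypothesis is strong enough that $\a$ behaves like a central multiplier, so the Hom-associator $u(vw)-(uv)w$ always lies in $\ker\a$, and since every axiom to be checked places an $\a$ (coming either from $\a^*$ or from one of the $\a$-twisted factors on the right of the desired identity) outside the product in question, the associator drops out automatically. The only mild care required is bookkeeping: at each step one must choose which of the equivalent forms $\a(uv)$, $\a(u)v$, $u\a(v)$ to present in order to make the next invocation of Hom-associativity syntactically apparent.
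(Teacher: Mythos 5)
Your overall strategy --- evaluate each axiom on a test vector, reduce it to an elementwise identity in $\Ac$, and shuttle $\a$ around using the centroid identity with one invocation of Hom-associativity in the middle --- is exactly the paper's proof, and your left- and right-module chains match the ones given there (your right-module chain is in fact slightly cleaner about parenthesization). The one place where your write-up does not close is the bimodule compatibility. That axiom, $\a(a)\cdot(f\cdot b)=(a\cdot f)\cdot\a(b)$, contains no $\a^*$, so evaluating on $x$ you must prove the elementwise identity $b(x\a(a))=(\a(b)x)a$ itself; your computation instead applies an extra $\a$ to both sides and establishes only $\a\bigl(b(x\a(a))\bigr)=\a\bigl((\a(b)x)a\bigr)$, which does not yield $f(P)=f(Q)$ for arbitrary $f\in\Ac^*$ unless $\a$ is injective, which is not assumed. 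Your own safety net --- that ``every axiom to be checked places an $\a$ outside the product in question'' --- is precisely what fails in this case: in the bimodule axiom the $\a$'s sit on $a$ and $b$ inside the products, not outside.

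The repair is immediate and uses only the tools you already listed:
\begin{equation*}
b(x\a(a))=b\,\a(xa)=\a(b)(xa)=(bx)\a(a)=\a(bx)\,a=(\a(b)x)a,
\end{equation*}
where all equalities are centroid moves except the third, which is Hom-associativity; this is the chain the paper gives. So the proposal is essentially correct and follows the paper's route, modulo this one bookkeeping slip in the final verification.
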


\begin{proof}
Since $\a$ is an element of the centroid we have
\begin{align*}
 ((aa')\cdot\a^\ast(f))(b)&=\a^\ast(f)(b(aa'))=f(\a(b(aa')))
  =f(\a(b)(aa'))\\&=f((ba)\a(a'))=f(\a(ba)a')
  =f((b\a(a))a')\\&=(a'\cdot f)(b\a(a))=(\a(a)\cdot(a'\cdot f))(b),
\end{align*}
that is $(\Ac^\ast,\a^\ast)$ is a left $\Ac$-module. Similarly
\begin{align*}
  (\a^\ast(f)(ab))(x)&= \a^\ast(f)((ab)x)=f(\a(ab)x)=f(a\a(b)x)\\
  &= (f\cdot a)(\a(b)x)=((f\cdot a)\cdot \a(b))(x),
\end{align*}
proving that $(\Ac^\ast,\a^\ast)$ is a right $\Ac$-module. We next observe that $\Ac^*$ is an $\Ac$-bimodule. Indeed,
\begin{align*}
 (\a(a)\cdot(f\cdot b))(c) &= (f\cdot b)(c\a(a)) =  f(b(c\a(a)))
 =f(b(\a(ca))\\&=f(\a(b)(ca))=f((bc)\a(a))=f(\a(bc)a)\\
 & =f((\a(b)c)a)=(a\cdot f)(\a(b)c)    =((a\cdot f)\cdot \a(b))(c).
\end{align*}
\end{proof}

It turns out that the  Hom-associative algebras whose $\a$ is an element of the centroid can be characterized by being embeddable into unital ones.

\begin{proposition}\label{prop-embed}
Let  $(\Ac,\mu,\a)$ be a Hom-associative algebra. The map $\a\in\End(\Ac)$ is an element of the centroid if and only if $\Ac$ can be embedded into a unital Hom-associative algebra.
\end{proposition}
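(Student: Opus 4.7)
The plan is to prove the two implications separately. For the easy direction ($\Leftarrow$), I would use the identity \eqref{aux-almost-unital-Hom-alg}, which tells us that in any unital Hom-associative algebra $\Bc$ we have $\a_\Bc(u)v = u\a_\Bc(v) = \a_\Bc(uv)$ for all $u,v \in \Bc$. Applying this to elements in the image of a Hom-associative embedding $\iota \colon \Ac \hookrightarrow \Bc$ (which by definition preserves the multiplication and intertwines $\a$ with $\a_\Bc$), and using injectivity, one pulls the identity back to $\Ac$ and recovers the centroid condition $\a(x)y = x\a(y) = \a(xy)$.

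For the harder direction ($\Rightarrow$), the plan is to use the structure theorem of Lemma~\ref{lemma-unital} as a blueprint for constructing a unital Hom-associative algebra $\Bc = A_1 \oplus A_2$ containing $\Ac$, with $A_1$ unital associative, $A_2$ unital (not necessarily associative), and $\a_\Bc$ the projection onto $A_1$. A preparatory observation is that $\Im(\a) \subseteq \Ac$ is closed under the multiplication of $\Ac$ (since $\a(x)\a(y) = \a(xy)$ by multiplicativity) and is in fact strictly associative on this image, because Hom-associativity combined with the centroid identities collapses to genuine associativity for elements of $\Im(\a)$. I would then take $A_1$ to be the unital associative algebra obtained by adjoining a unit to $\Im(\a)$, take $A_2$ to be a unital (not necessarily associative) algebra containing the complementary subspace $(\Id - \a)(\Ac)$, and propose the embedding $\iota \colon \Ac \to A_1 \oplus A_2$, $a \mapsto (\a(a),\, a - \a(a))$.

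The main obstacle will be verifying that $\iota$ is a homomorphism of Hom-associative algebras. Multiplicativity $\iota(ab) = \iota(a)\iota(b)$ reduces to the inner identity $(a - \a(a))(b - \a(b)) = ab - \a(ab)$ in $\Ac$, which follows by expansion from the centroid identities $\a(a)b = a\a(b) = \a(ab) = \a(a)\a(b)$. Compatibility with the twist maps, $\a_\Bc \circ \iota = \iota \circ \a$, requires $\a$ to be idempotent on $\Ac$; this I would derive from the centroid condition together with multiplicativity, which forces $\a^2(x)\, y = \a(x)\a(y) = \a(xy) = \a(x)\, y$ for all $x,y \in \Ac$, so that $\a$ fixes $\Im(\a)$ and the chosen structure of $A_1, A_2$ can be tuned accordingly. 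Once these checks are complete, $\iota$ provides the required embedding of $\Ac$ into the unital Hom-associative algebra $\Bc = A_1 \oplus A_2$.
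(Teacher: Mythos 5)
Your backward implication is fine and is exactly the paper's: pull the unital identity $\a(u)v=u\a(v)=\a(uv)$ back along the embedding. The forward implication, however, has a genuine gap: your construction requires $\a^2=\a$, and your derivation of this idempotency is invalid. From the centroid condition and multiplicativity you correctly obtain $\a^2(x)y=\a(x)y$ for all $y$, but this only says that $(\a^2-\a)(x)$ annihilates $\Ac$ under left multiplication; it does not force $\a^2(x)=\a(x)$. A concrete counterexample is $\Ac$ with identically zero multiplication and $\a$ any non-idempotent endomorphism (say $\a=2\,\Id$, or a nilpotent Jordan block): every hypothesis of the proposition holds, yet $\a^2\neq\a$. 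Without idempotency your map $\iota(a)=(\a(a),\,a-\a(a))$ fails to intertwine the twists, since $\iota(\a(a))=(\a^2(a),\,\a(a)-\a^2(a))$ while $\a_\Bc(\iota(a))=(\a(a),0)$, and the sum $\Im(\a)+(\Id-\a)(\Ac)$ need not be direct. More fundamentally, Lemma \ref{lemma-unital} classifies unital \emph{multiplicative} Hom-associative algebras, so using it as a blueprint forces the target of the embedding to be multiplicative, which (by the lemma immediately following Proposition \ref{prop-embed}) happens exactly when $\a^2=\a$ on $\Ac$ --- an extra hypothesis you do not have.

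The paper sidesteps all of this with a different unitalization: it sets $\Bc=k[\a]\oplus\Ac$ with product $(p(\a)+a)(q(\a)+b)=p(\a)q(\a)+q(\a)(a)+p(\a)(b)+ab$, unit $1\in k[\a]$, and twist $\beta$ acting as $\a$ on $\Ac$ and as multiplication by $\a$ on $k[\a]$; the centroid condition is precisely what makes the Hom-associativity check go through, and no idempotency is needed because $\Bc$ is not required to be multiplicative. To salvage your decomposition approach you would have to either add the hypothesis $\a^2=\a$, or give up on a multiplicative target and hence on Lemma \ref{lemma-unital} as the structural blueprint.
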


\begin{proof}
If $\Ac$ can be embedded into a unital Hom-associative algebra, then by \cite{FregGohr09} the equation \eqref{aux-unital-Hom-alg} holds.

Conversely, assuming the equation \eqref{aux-unital-Hom-alg}, we let $\Bc=k[\a]\oplus \Ac$ with multiplication $(p(\a)+a)(q(\a)+b)=p(\a)q(\a)+(q(\a)(a)+p(\a)(b)+ab)$, where $q(\a)(a)$ is the element of $\Ac$ obtained by interpreting $\a\in k[\a]$ as the linear endomorphism $\a$ that is part of the Hom-associative algebra structure of $\Ac$.  Define $(\Bc,\mu,\beta)$ by setting $\beta|_{\Ac}=\a$ and $\beta|_{k[\a]}=m_{\a}$, the multiplication by $\alpha$.
Note that \eqref{aux-unital-Hom-alg} implies that $\a\a^k(ab)=\a(a)\a^k(b)=\a^k(a)\a(b)$ for $k\geq 0$ and so $\a s(\a)(ab)=\a(a)(s(\a)(b))=(s(\a)(a))\a(b)$.  With this in mind we check that
\begin{align*}
\beta&(p(\a)+a)((q(\a)+b)(s(\a)+c))\\
&=(\a(p(\a))+\a(a))(q(\a)s(\a)+q(\a)(c)+s(\a)(b)+bc)\\
&=\a p(\a)q(\a)s(\a)+(\a p(\a))(q(\a)(c))+(\a p(\a))(s(\a)(b))+(\a p(\a))(bc)\\&\quad+(q(\a)s(\a))(\a(a))+\a(a)(q(\a)(c))+\a(a)(s(\a)(b))+\a(a)(bc)\\&=p(\a)q(\a)\a s(\a)+\a s(\a)(q(\a)(a))+\a s(\a)(p(\a)(b))+\a s(\a)(ab)\\&\quad+(p(\a)q(\a))(\a(c))+(q(\a)(a))\a(c)+(p(\a)(b))\a(c)+(ab)\a(c)\\&=(p(\a)q(\a)+q(\a)(a)+p(\a)(b)+ab)(\a s(\a)+\a(c))\\&=((p(\a)+a)(q(\a)+b))\beta(s(\a)+c).
\end{align*}
Hence, $(\Bc,\mu,\beta)$ is a unital Hom-associative algebra with a subalgebra $\Ac$.
\end{proof}

Furthermore, the following observation characterizes the multiplicativity of the Hom-associative algebra $\Bc$.

\begin{lemma}
Let  $\Bc$ be as in Proposition \ref{prop-embed}. Then $\Bc$ is multiplicative if and only if  $\alpha^2=\alpha $ on $\Ac$.
\end{lemma}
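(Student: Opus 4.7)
The plan is to expand both sides of the multiplicativity identity $\beta(xy) = \beta(x)\beta(y)$ for arbitrary $x = p(\a)+a$ and $y = q(\a)+b$ in $\Bc$, and then compare the resulting components in the decomposition $\Bc = k[\a]\oplus\Ac$.

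First I would apply $\beta|_{\Ac}=\a$ and $\beta|_{k[\a]}=m_{\a}$ together with the multiplication rule of $\Bc$ to obtain
\[
\beta(xy) = \alpha\, p(\alpha)q(\alpha) + \alpha\bigl(q(\a)(a)\bigr) + \alpha\bigl(p(\a)(b)\bigr) + \alpha(ab),
\]
and, after expanding the product of $\beta(x) = \a p(\a)+\a(a)$ with $\beta(y) = \a q(\a)+\a(b)$,
\[
\beta(x)\beta(y) = \alpha^2 p(\alpha)q(\alpha) + \bigl(\alpha q(\a)\bigr)(\alpha(a)) + \bigl(\alpha p(\a)\bigr)(\alpha(b)) + \alpha(a)\alpha(b).
\]
Specializing to $p=q=1$ and $a=b=0$ isolates the $k[\a]$-component and forces the necessary condition $\a = \a^2$ inside $k[\a]$. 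Since, following the proof of Proposition \ref{prop-embed}, the formal $\a \in k[\a]$ is identified with the endomorphism $\a$ of $\Ac$, this identity is exactly $\a^2 = \a$ on $\Ac$, which gives the forward direction.

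For the converse I would assume $\a^2 = \a$ on $\Ac$ and check equality of the two expansions term by term. The $k[\a]$-components agree because $\a^2 p(\a)q(\a) = \a\, p(\a)q(\a)$. For the $\Ac$-components, the fact that $\a$ commutes with any polynomial in $\a$ gives $(\a q(\a))(\a(a)) = \a^2\bigl(q(\a)(a)\bigr) = \a\bigl(q(\a)(a)\bigr)$, and symmetrically for the $p$-term. Finally, iterating the centroid identity $\a(a)b=\a(ab)=a\a(b)$ produces $\a(a)\a(b) = \a^2(ab)$, which collapses to $\a(ab)$ under the hypothesis. Thus all four pairs of corresponding summands match and $\beta$ is multiplicative.

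The main subtlety will be justifying the interpretation of $k[\a]$: read as a free polynomial ring, the identity $\a = \a^2$ could never hold, so the forward direction would be vacuous. I would therefore emphasize at the outset that, as in the construction of $\Bc$ in Proposition \ref{prop-embed}, $k[\a]$ is the commutative subalgebra of $\End(\Ac)$ generated by $\a$, so that any identity among formal polynomials in $\a$ inside $\Bc$ is exactly the corresponding identity of endomorphisms of $\Ac$.
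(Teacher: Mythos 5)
Your proof is correct. The paper actually states this lemma without proof, so there is nothing to compare against; your direct expansion of $\beta(xy)$ versus $\beta(x)\beta(y)$ in the two components of $\Bc=k[\a]\oplus\Ac$ is the natural argument, and all four pairs of summands are handled correctly (in particular, the identity $\a(a)\a(b)=\a^2(ab)$ does follow from two applications of the centroid relation, which is available since that relation is the standing hypothesis under which $\Bc$ is built in Proposition \ref{prop-embed}). You are also right to flag the one genuinely delicate point: the forward direction, via $p=q=1$, $a=b=0$, only yields $\a=\a^2$ \emph{on $\Ac$} if $k[\a]$ is read as the subalgebra of $\End(\Ac)$ generated by $\Id$ and $\a$ rather than as a free polynomial ring; the paper's phrasing ("interpreting $\a\in k[\a]$ as the linear endomorphism") leaves this ambiguous, and your resolution is the only one under which the lemma is non-vacuous, so stating it explicitly is a genuine improvement on the paper.
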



We unify the above two approaches to the problem of the representation of a Hom-associative algebra $\Ac$ on $\Ac^\ast$ in the following manner.

\begin{definition}
  Let $(\Ac, \a)$ be a Hom-associative algebra. A vector space $V$ is called a dual left $\Ac$-module if there are linear maps $\cdot: \Ac\ot V\longrightarrow V$, and  $\b: V\longrightarrow V$ where
  \begin{equation}
    a\cdot (\a(b)\cdot v)=\b((ab)\cdot v).
  \end{equation}
\end{definition}
Similarly, $V$ is called a dual right $\Ac$-module if $v\cdot (\a(a))\cdot b= \b(v\cdot (ab))$. Finally, we call $V$ a dual $\Ac$-bimodule if $\a(a)\cdot (v\cdot b)=(a\cdot v)\cdot \a(b).$

The definition is just another Hom-associative generalization of the notion of bimodule. In case $\Ac$ is associative, the above definition coincides with the definition of a bimodule over an associative algebra.

The next observation justifies our choice of dual module being one which is compatible with $\Ac^\ast$ as a coefficient space, rather than $\Ac$. In this language the definition \cite[Def. 1.5]{MakhSilv10} of module over a Hom-associative algebra is one which is compatible with $\Ac$, which served as the coefficient space in the Hochschild cohomology theory of \cite{AmmaEjbeMakh11}.

\begin{lemma}\label{property}
Let $(\Ac, \a)$ be a Hom-associative algebra, and $(V, \beta)$ an $\Ac$-bimodule. Then the algebraic dual $V^*$ is a dual $\Ac$-bimodule.
\end{lemma}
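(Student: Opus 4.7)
The plan is to endow $V^*$ with the standard dual actions coming from the bimodule structure of $V$, together with a dualized twist, and then to verify each of the three axioms of a dual $\Ac$-bimodule by direct reduction to the corresponding axiom on $V$. Concretely, for $f\in V^*$, $v\in V$, and $a\in\Ac$, I would set $(a\cdot f)(v):=f(v\cdot a)$, $(f\cdot a)(v):=f(a\cdot v)$, and $\b^{\vee}(f):=f\circ\b$, so that the left $\Ac$-action on $V^*$ is dual to the right $\Ac$-action on $V$, the right $\Ac$-action on $V^*$ is dual to the left $\Ac$-action on $V$, and $\b^{\vee}:V^*\lra V^*$ plays the role of the twist for $V^*$.

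I would then verify the dual left $\Ac$-module axiom $a\cdot(\a(b)\cdot f)=\b^{\vee}((ab)\cdot f)$: evaluated at an arbitrary $v\in V$, both sides unwind to $f((v\cdot a)\cdot\a(b))$ and $f(\b(v)\cdot(ab))$ respectively, so the axiom follows at once from the right $\Ac$-module axiom $\b(v)\cdot(ab)=(v\cdot a)\cdot\a(b)$ on $V$. Symmetrically, the dual right $\Ac$-module axiom $(f\cdot\a(a))\cdot b=\b^{\vee}(f\cdot(ab))$ reduces to the left $\Ac$-module axiom $(ab)\cdot\b(v)=\a(a)\cdot(b\cdot v)$ on $V$. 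Finally, the bimodule compatibility $\a(a)\cdot(f\cdot b)=(a\cdot f)\cdot\a(b)$, evaluated at $v$, amounts to showing $f(b\cdot(v\cdot\a(a)))=f((\a(b)\cdot v)\cdot a)$, which I would derive from the $\Ac$-bimodule axiom $\a(x)\cdot(w\cdot y)=(x\cdot w)\cdot\a(y)$ on $V$ via an appropriate choice of substitutions, together with the multiplicativity identity $\a(xy)=\a(x)\a(y)$ of $\a$ on $\Ac$.

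The main obstacle I expect is precisely this last step. The dual left and right axioms are formally symmetric to the right and left module axioms on $V$ once one evaluates at a test vector, so they essentially write themselves. The bimodule compatibility, however, swaps the roles of the left and right actions of $V$ in passing to $V^*$, and it is the transfer of the $\a$-twist from one side of a product to the other that will require the most careful bookkeeping. All the remaining identifications are routine applications of the corresponding identities on $V$ and of the definitions of the dual actions.
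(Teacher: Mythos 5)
Your choice of dual actions $(a\cdot f)(v)=f(v\cdot a)$, $(f\cdot a)(v)=f(a\cdot v)$ and $\b^{\vee}(f)=f\circ\b$ is exactly the one the paper uses, and your verifications of the dual left and dual right module axioms coincide with the paper's computation (the paper writes out only the left one and declares the other two ``similar''). The gap is precisely the step you flag as delicate and then leave to ``an appropriate choice of substitutions'': the compatibility $\a(a)\cdot(f\cdot b)=(a\cdot f)\cdot\a(b)$ unwinds, as you correctly note, to the identity $b\cdot(v\cdot\a(a))=(\a(b)\cdot v)\cdot a$ in $V$. This is not an instance of the bimodule axiom $\a(x)\cdot(w\cdot y)=(x\cdot w)\cdot\a(y)$ under any substitution: in that axiom the twists sit on the two \emph{outer} factors, whereas in the identity you need they sit on the two \emph{inner} ones, and neither the module axioms nor multiplicativity of $\a$ lets you transport a twist from an inner to an outer position (that would amount to $\a$ acting as an element of the centroid).

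In fact the identity fails in general. Take $A=k[x]$ with $\a(p(x))=p(x^{2})$, and let $\Ac=(A,\a\circ\mu,\a)$ be the multiplicative Hom-associative algebra of Example \ref{twist}, viewed as a bimodule over itself with $\b=\a$ and $p\star q=\a(pq)$. With $b=v=1$ and $a=x$ one finds $b\star(v\star\a(a))=1\star(1\star x^{2})=x^{8}$ while $(\a(b)\star v)\star a=(1\star 1)\star x=x^{2}$; taking $f$ to be the coefficient-of-$x^{8}$ functional, the two sides of the compatibility for $V^{\ast}$ disagree. So the derivation you promise does not exist: the dual left and right module structures go through exactly as you say, but the bimodule compatibility for $V^{\ast}$ requires an additional hypothesis (for instance $\a$ bijective or in the centroid, or a restriction to a subspace of $V^{\ast}$ in the spirit of $\Ac^{\circ}$). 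To be fair, the paper's own proof hides this same step behind the word ``similar'', so you have located the one genuinely non-routine point — but as written your proposal asserts a reduction that cannot be carried out.
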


\begin{proof}
We first note that
\begin{align*}
(a\cdot (\a(b)\cdot f))(v)&= f((v\cdot a)\cdot\a(b))
=f((\beta(v)\cdot ab)\\&= ((ab)\cdot f)(\b(v))=\b((ab)\cdot f)(v).
\end{align*}
Therefore $V^*$ is a dual left $\Ac$-module. That it is a dual right $\Ac$-module, as well as a dual $\Ac$-bimodule is similar.
\end{proof}

We conclude this subsection recalling from \cite{MakhSilv09} the morphisms of Hom-associative algebras. Let $(\Ac, \a_{\Ac})$ and $(\Bc, \a_{\Bc})$ be two Hom-associative algebras. Then a linear map $f: (\Ac, \a_{\Ac})\longrightarrow (\Bc, \a_{\Bc})$ is called a morphism of Hom-associative algebras if it is an algebra morphism and
$f(\a_{\Ac}(x))=\a_{\Bc}(f(x))$ for $x,y\in \Ac.$

\section{Homology of Hom-associative algebras}

In this section we introduce Hochschild, cyclic and periodic cyclic homologies for Hom-associative algebras. We first define Hochschild homology of a Hom-associative algebra $\Ac$ with coefficients in an $\Ac$-bimodule. Then, using $\Ac$ itself as the coefficient space via \cite[Rk. 1.6]{MakhSilv10}, we define the cyclic homology of $\Ac$ via the cyclic coinvariant of the Hochschild complex of $\Ac$.

\subsection{Hochschild homology of a Hom-associative algebra}

In this subsection we define the Hochschild homology of a Hom-associative algebra with coefficients in an $\Ac$-bimodule.

\begin{theorem}
Let $(\Ac,\mu, \alpha)$ be a Hom-associative algebra, and $(V, \beta)$ be an $\Ac$-bimodule such that
$$\b(v\cdot a) = \b(v)\cdot \a(a) \quad \text{and} \quad \b(a\cdot v)=\a(a)\cdot \b(v).$$ Then
\begin{equation*}
C^{Hom}_\ast(\Ac, V)=\bigoplus_{n\geq 0}C^{Hom}_n(\Ac, V),\qquad C^{Hom}_n(\Ac, V):=V\ot \Ac^{\ot n},
\end{equation*}
with the face maps
\begin{align*}
&\d_0(v\ot a_1\ot \cdots \ot a_{n})= v \cdot a_1 \ot \a(a_2)\ot \cdots \ot \a(a_{n})\\
&\d_i(v\ot a_1\ot \cdots \ot a_{n})=\b(v)\ot \a(a_1) \cdots \ot a_i a_{i+1}\ot \cdots\ot \alpha(a_{n}), ~~ 1\leq i \leq n-1\\
&\d_{n}(v\ot a_1\ot \cdots \ot a_n)= a_{n} \cdot v \ot \a(a_1)\ot  \cdots\ot \a(a_{n-1})
  \end{align*}
is a presimplicial module.
\end{theorem}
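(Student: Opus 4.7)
The plan is to verify the presimplicial identities $\delta_i \delta_j = \delta_{j-1} \delta_i$ for all $0 \leq i < j \leq n$ by a case analysis organized according to whether the indices $i$ and $j$ lie at the extremes ($0$ or $n$) or in the interior ($1 \le i, j \le n-1$); this is forced by the fact that $\delta_0$, $\delta_n$, and the interior $\delta_i$ have structurally different formulas. In every case the two iterated face maps produce, up to the single identity to be verified, the same tensor of length $n-1$ with every untouched algebra entry twisted by $\alpha^2$ and the module entry acted on twice.

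First I would handle the interior cases $1 \le i < j \le n-1$. If the indices are non-adjacent ($j > i+1$), the two contractions happen on disjoint pairs of algebra arguments, and reconciling the two orders of application reduces to the multiplicativity $\alpha(ab) = \alpha(a)\alpha(b)$. In the adjacent subcase $j = i+1$ (with $1 \le i \le n-2$), after cancelling the common $\beta(v)$ and $\alpha^2$-twists on untouched entries, the identity collapses to $\alpha(a_i)(a_{i+1} a_{i+2}) = (a_i a_{i+1}) \alpha(a_{i+2})$, which is precisely the Hom-associativity axiom \eqref{aux-Hom-assoc-cond}.

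Next I would deal with the three extremal adjacent cases, which are where the module/bimodule axioms enter. The pair $(i,j) = (0,1)$ demands $(v \cdot a_1) \cdot \alpha(a_2) = \beta(v) \cdot (a_1 a_2)$, which is the right-module axiom; $(i,j) = (n-1, n)$ demands $(a_{n-1} a_n) \cdot \beta(v) = \alpha(a_{n-1}) \cdot (a_n \cdot v)$, which is the left-module axiom \eqref{aux-Hom-module}; and $(i,j) = (0,n)$ demands $(a_n \cdot v) \cdot \alpha(a_1) = \alpha(a_n) \cdot (v \cdot a_1)$, which is the bimodule compatibility \eqref{aux-A-bimodule}. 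Finally, the mixed non-adjacent cases $(i = 0, \; 2 \le j \le n-1)$ and $(1 \le i \le n-2, \; j = n)$ combine multiplicativity on the $\alpha$-twisted algebra entries with the hypotheses $\beta(v \cdot a) = \beta(v) \cdot \alpha(a)$ and $\beta(a \cdot v) = \alpha(a) \cdot \beta(v)$, which are exactly what allows $\beta$ to be pulled through the module action that arose from $\delta_0$ or $\delta_n$.

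The main obstacle, and the sole reason the statement hypothesizes $\beta$ to commute with the actions, sits in this last family of mixed cases: without $\beta(v \cdot a) = \beta(v) \cdot \alpha(a)$, composing $\delta_0$ with an interior $\delta_{j-1}$ produces $\beta(v \cdot a_1)$, while composing $\delta_j$ with $\delta_0$ produces $\beta(v) \cdot \alpha(a_1)$, and these agree only under the stated compatibility; the symmetric issue arises for $\delta_n$. Once these identities are assembled, no further work is required, and the presimplicial structure follows.
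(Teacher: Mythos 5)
Your proposal is correct and follows essentially the same route as the paper: a direct case-by-case verification of $\delta_i\delta_j=\delta_{j-1}\delta_i$, with the right/left module axioms handling $(0,1)$ and $(n-1,n)$, the bimodule compatibility handling $(0,n)$, Hom-associativity and multiplicativity handling the interior cases, and the hypotheses $\beta(v\cdot a)=\beta(v)\cdot\alpha(a)$, $\beta(a\cdot v)=\alpha(a)\cdot\beta(v)$ entering exactly in the mixed non-adjacent cases, as you say. Your enumeration is in fact slightly more systematic than the paper's, which writes out only representative cases and leaves the rest as ``similar.''
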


\begin{proof}
We will show that $\delta_i \delta_j= \delta_{j-1} \delta_{i}$ for $0\leq i< j \leq n$ \cite[Def. 1.0.6]{Loday-book}.
The equality $\d_0\d_0=\d_0\d_1$ follows at once from the right $\Ac$-module compatibility $\b(v)\cdot (a_1a_2)= (v \cdot a_1)\cdot \a(a_2)$. We next observe that $\d_0 \d_j=\d_{j-1}\d_0$ for $j> 1$. Indeed,
  \begin{align*}
    \d_0\d_j(v\ot a_1&\odots a_n)\\
    &=\d_0(\b(v)\ot \a(a_1)\odots a_j a_{j+1}\odots \a(a_n)) \\
    &=\b(v)\cdot\a(a_1)\ot \a^2(a_2)\odots \a(a_j a_{j+1})\odots  \a^2(a_n) \\
    &=\b(v\cdot a_1)\ot \a^2(a_2)\odots \a(a_j )\a(a_{j+1})\odots  \a^2(a_n) \\
    &=\d_{j-1}(v\cdot a_1\ot \a(a_2)\odots \a(a_n)) \\
    &=\d_{j-1}\d_0(v\ot a_1\odots a_n).
  \end{align*}
The equality $\d_i\d_n=\d_{n-1}\d_i$ follows similarly. Let us finally show that $\d_0\d_n=\d_{n-1}\d_0$. We have,
  \begin{align*}
    \d_0\d_n(v\ot a_1&\odots a_n)\\
    &=\d_0(a_n\cdot v\ot \a(a_1)\odots \a(a_{n-1}))\\
    &=(a_n\cdot v )\cdot \a(a_1)\ot \a^2(a_2)\odots \a^2(a_{n-1})\\
    &=\a(a_n)\cdot (v\cdot a_1)\ot \a^2(a_2)\odots \a^2(a_{n-1})\\
    &=\d_{n-1}(v\cdot a_1\ot \a(a_2)\odots \a(a_n))\\
    &=\d_{n-1}\d_0(v\ot a_1\odots a_n).
  \end{align*}
\end{proof}

As a result $(C^{Hom}_\ast(\Ac, V),b)$ becomes a differential complex with
\begin{align*}
b=\sum_{i=0}^n (-1)^i \d_i&:C^{Hom}_n(\Ac, V)\longrightarrow C^{Hom}_{n-1}(\Ac, V),\\
b(v\ot a_1\odots a_n)&=v\cdot a_1\ot \a(a_2)\odots \a(a_n)\\
&\quad+\sum_{i=1}^{n-1} (-1)^i \b(v)\ot \a(a_1)\odots a_ia_{i+1} \odots \a(a_n)\\
&\quad+(-1)^n a_n\cdot v\ot \a(a_1)\odots \a(a_{n-1}).
\end{align*}
We denote the homology of the complex $(C^{Hom}_\ast(\Ac, V),b)$ by $H^{Hom}_\ast(\Ac, V)$ and call it the Hochschild homology of the Hom-associative algebra $\Ac$ with coefficients in the $\Ac$-bimodule $V$.

\begin{example}{\rm
If the Hom-associative algebra $(\Ac, \a)$ is associative, i.e., $\a=\Id$, then $H^{Hom}_\ast(\Ac, V)$ is the ordinary Hochschild homology of an associative algebra.
  }
\end{example}

\begin{example}{\rm
  If $\Ac$ is a  Hom-associative algebra and $M$ an $\Ac$-bimodule, then
\begin{equation*}
  H_0(\Ac,M)= \frac{M}{[M,A]}.
\end{equation*}
}\end{example}

\begin{example}{\rm
Let $\Ac$ be the Hom-associative algebra in Example  \ref{unital-example}. Then
  $HH_1(\Ac)=H_1(\Ac, \Ac)$ is one dimensional.
}
\end{example}

\begin{remark}\label{remark-A-alpha}
{\rm
Let $A$ be an associative algebra with the multiplication map $\mu:A\ot A\lra A$, and $\alpha\in\End(A)$. Then for $\Ac_\a = A$, and the multiplication $\mu_\a=\alpha \circ \mu:\Ac\ot\Ac\lra\Ac$, the triple  $(\Ac_\a, \mu_{\alpha}, \alpha)$ is a multiplicative Hom-associative algebra, \cite{Yau09}. It is easy to see that 
\begin{equation*}
(C^{Hom}_\ast(\Ac_\alpha, \Ac_\alpha), d^{Hom})=(C_\ast(A, A), \alpha d=d\alpha)
\end{equation*}
as complexes, where the latter is the usual Hochschild homology complex of the associative algebra $A$ with the modified differential, via composition with $\alpha$ which acts on $A^{\otimes n}$ by $\alpha(a_1\odots a_n)=\alpha(a_1)\odots \alpha(a_n)$.
}
\end{remark}

An immediate consequence is the following lemma.

\begin{lemma}
Let $\Ac_\a$ be the Hom-associative algebra of Remark \ref{remark-A-alpha} such that $\alpha\in\End(\Ac_\a)$ is an isomorphism. Then, 
\begin{equation*}
HH^{Hom}_\ast(\Ac_\alpha)=HH_\ast(A).
\end{equation*}
\end{lemma}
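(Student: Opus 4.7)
The plan is to exploit Remark \ref{remark-A-alpha} to reduce the statement to a purely homological comparison of two differentials on the same graded vector space, and then build an explicit degree-wise twist by a power of $\alpha$ that converts one differential into the other.

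First I would set notation: write $\tilde\alpha_n = \alpha^{\otimes(n+1)}$ for the automorphism of $C_n(A,A) = A^{\otimes(n+1)}$ induced by $\alpha$, which is an isomorphism precisely because $\alpha \in \End(A)$ is. By Remark \ref{remark-A-alpha} the Hom-Hochschild complex of $\Ac_\alpha$ is the ordinary Hochschild complex of $A$ with differential $d' := \tilde\alpha \circ d = d \circ \tilde\alpha$ (the two forms agree because $\alpha$ is an algebra endomorphism, so it intertwines with the face maps). Thus the whole problem becomes: show that $H_\ast(C_\ast(A,A), d)$ and $H_\ast(C_\ast(A,A), d')$ are isomorphic.

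Next, I would construct the chain isomorphism $\Psi_\bullet \colon (C_\ast(A,A), d') \to (C_\ast(A,A), d)$ defined in degree $n$ by $\Psi_n = \tilde\alpha_n^{\,n}$. Since $\alpha$ is invertible, each $\Psi_n$ is an isomorphism of vector spaces. The chain-map condition $d \circ \Psi_n = \Psi_{n-1} \circ d'$ reduces, using $d \circ \tilde\alpha = \tilde\alpha \circ d$, to the identity
\begin{equation*}
d \circ \tilde\alpha_n^{\,n} \;=\; \tilde\alpha_{n-1}^{\,n} \circ d \;=\; \tilde\alpha_{n-1}^{\,n-1} \circ \tilde\alpha_{n-1} \circ d \;=\; \Psi_{n-1} \circ d'.
\end{equation*}
Hence $\Psi_\bullet$ is an isomorphism of chain complexes, and passing to homology gives $HH^{Hom}_\ast(\Ac_\alpha) = HH_\ast(A)$.

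The only subtlety, and what I would treat as the main obstacle, is finding the right degree-dependent exponent for the twist: a naive choice like $\Psi_n = \tilde\alpha$ (constant in $n$) is a chain endomorphism of $(C_\ast, d)$ but does not convert $d'$ into $d$. The recursion $a_{n-1} = a_n + 1$ forced by the chain-map condition selects $a_n = n$ (up to an overall shift) as the unique workable exponent, and the invertibility of $\alpha$ is exactly what allows such a twist to be an isomorphism rather than merely a quasi-isomorphism. I would also note in passing that without the invertibility hypothesis one could at best get a chain map but not a bijection, which is why the assumption on $\alpha$ cannot be dropped by this argument.
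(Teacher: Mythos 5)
Your argument is correct: the identity $d\circ\tilde\alpha_n=\tilde\alpha_{n-1}\circ d$ does hold because $\alpha$ is an algebra endomorphism, the exponent bookkeeping $d\circ\tilde\alpha_n^{\,n}=\tilde\alpha_{n-1}^{\,n}\circ d=\Psi_{n-1}\circ d'$ checks out, and invertibility of $\alpha$ makes each $\Psi_n$ bijective, so you do get an isomorphism of chain complexes and hence of homologies. However, the paper's proof is different and considerably more economical: it uses no chain map at all. Writing the twisted differential in its two equivalent forms $d'=\tilde\alpha\circ d=d\circ\tilde\alpha$, the paper simply observes that injectivity of $\tilde\alpha$ gives $\ker(\tilde\alpha d)=\ker(d)$ and surjectivity gives $\Im(d\tilde\alpha)=\Im(d)$, so the cycles and boundaries of the two complexes are \emph{literally the same subspaces} of $A^{\otimes(n+1)}$ and the homologies coincide on the nose. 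What your construction buys in exchange for the extra work is an explicit chain-level isomorphism $\Psi_n=\tilde\alpha_n^{\,n}$, which is potentially more useful for transporting additional structure (for instance, since the cyclic operator $t_n$ commutes with $\tilde\alpha_n$, your $\Psi$ descends to the cyclic quotient complexes, whereas the kernel/image observation would have to be redone there); the paper's argument buys brevity and an equality rather than a mere isomorphism. Your closing remark that invertibility cannot be dropped is consistent with both approaches, and your diagnosis that a degree-independent twist fails is accurate.
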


\begin{proof}
The claim follows from the observation that if $\alpha:A\lra A$ is an isomorphism, then $\ker(\alpha d)=\ker(d)$ and $\Im(d\alpha)=\Im(d)$.
\end{proof}

Furthermore, it follows from \cite[Corol. 8]{HellMakhSilv14} (see also \cite{Gohr10}) that any multiplicative Hom-associative algebra $(\Ac,\a)$ such that $\alpha\in\End(\Ac)$ is an isomorphism, is of the form $\Ac_\alpha$ for the associative algebra $(\Ac,\alpha^{-1}\circ\mu)$. As a result, we conclude the following reduction of the Hom-Hochschild homology of $\Ac$ to the Hochschild homology of $A$.

\begin{corollary}
If $(\Ac,\mu,\alpha)$ is a multiplicative Hom-associative algebra such that $\alpha:\Ac\lra \Ac$ is an isomorphism, then 
\begin{equation*}
HH^{Hom}_\ast(\Ac)=HH_\ast(A),
\end{equation*}
where $A$ is the associative algebra $(\Ac,\alpha^{-1}\circ\mu)$.
\end{corollary}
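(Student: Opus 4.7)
The plan is to reduce the claim directly to the previous lemma by exhibiting $\Ac$ as being of the form $A_\alpha$ for an associative algebra $A$ built from $\Ac$. First I would invoke \cite[Corol. 8]{HellMakhSilv14}, cited just before the corollary, which asserts that every multiplicative Hom-associative algebra with invertible twist arises from the Yau-type construction of Example \ref{twist} applied to the associative algebra with product $\alpha^{-1}\circ\mu$.

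For self-containedness I would verify the underlying associativity directly. Setting $a\ast b := \alpha^{-1}(ab)$, and applying the injection $\alpha$ to both sides of the desired identity $(a\ast b)\ast c = a\ast(b\ast c)$, multiplicativity reduces the question to the equality $(ab)\alpha(c)=\alpha(a)(bc)$, which is precisely the Hom-associativity of $\Ac$. Since $\alpha$ is injective this is equivalent to associativity of $\ast$; hence $A:=(\Ac,\ast)$ is an associative algebra, and by construction $\mu = \alpha\circ \ast$, so that $\Ac$ coincides with $A_\alpha$ in the notation of Remark \ref{remark-A-alpha}.

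With this identification in place, I would observe that the invertibility hypothesis on $\alpha:\Ac\lra\Ac$ transfers verbatim to $\alpha\in\End(A_\alpha)$, since the underlying vector space and linear map are unchanged. The hypothesis of the preceding lemma is therefore satisfied, and that lemma yields
\begin{equation*}
HH^{Hom}_\ast(\Ac) \;=\; HH^{Hom}_\ast(A_\alpha) \;=\; HH_\ast(A),
\end{equation*}
which is the desired conclusion.

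I do not foresee a genuine obstacle: the corollary is essentially a two-step chaining of the cited embedding result of \cite{HellMakhSilv14} with the preceding lemma. The only mildly delicate point is to confirm that the associative structure produced by that citation is precisely $\alpha^{-1}\circ\mu$ on the nose, so that the $\alpha$-twist of Example \ref{twist} recovers the original multiplication $\mu$; this is exactly the short computation sketched above.
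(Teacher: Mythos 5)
Your proposal is correct and follows essentially the same route as the paper: the corollary is obtained by combining the cited result of \cite[Corol.\ 8]{HellMakhSilv14} (that an invertible-twist multiplicative Hom-associative algebra is $A_\alpha$ for $A=(\Ac,\alpha^{-1}\circ\mu)$) with the preceding lemma. Your added direct verification that $\alpha^{-1}\circ\mu$ is associative (via applying $\alpha$ twice and using multiplicativity) is a harmless and correct supplement to what the paper leaves to the citation.
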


\begin{remark}
{\rm
  For associative algebras, it is known that the Hochschild homology of direct sum algebras is a direct sum of Hochschild homologies; more precisely, we have $HH_n(A_1\oplus A_2)\cong HH_n(A_1)\oplus HH_n(A_2).$ However a quick calculation shows that this is not the case for Hom-associative algebras even in the very special case of a unital multiplicative Hom-associative algebra.  Let us recall Example \ref{unital-example}, i.e., $\Ac=k(e_1-e_2)\oplus ke_2=k_1\oplus k_2$ with $\a=proj_{k_1}$.  Thus $\Ac=k_1\oplus k_2$ with $\a_1=Id$ and $\a_2=0$.  We have $HH^{Hom}_i(k_1)=HH_i(k_1)$ and the latter vanishes except for $HH_0(k_1)=k$.  On the other hand $HH^{Hom}_i(k_2)=k$ for all $i$.  We observe that we have a direct sum decomposition of complexes $$C_\ast^{Hom}(\Ac,\Ac)=C^{\leq 2}_\ast\oplus C^{\geq 3}_\ast$$ where $C^{\leq 2}_\ast$ consists of tensors with at most two from $k_2$ and $C^{\geq 3}_\ast$ consists of tensors with at least three from $k_2$.  We remark that the differential on $C^{\geq 3}_\ast$ is identically zero.  This shows that in the higher degrees $HH^{Hom}_\ast(\Ac)$ is much larger than $HH_\ast^{Hom}(k_1)\oplus HH_\ast^{Hom}(k_2)=k$.
}
\end{remark}

\subsection{Cyclic homology of a Hom-associative algebra}

In this subsection we introduce cyclic homology for Hom-associative algebras through two different but equivalent methods.

\subsubsection{Cyclic homology as a cokernel}

We construct the cyclic homology of a Hom-associative algebra $\Ac$ out of the Hochschild homology of $\Ac$ with coefficients in $\Ac$ by factoring out the cyclic group action. Let us denote the Hochschild complex $C^{Hom}_\ast(\Ac,\Ac)$ simply by $C^{Hom}_\ast(\Ac)$. We note that in this case we have
\begin{align}\label{aux-b-Hochschild}
\begin{split}
 b(a_0\odots a_n) &= \sum_{i=0}^{n-1} (-1)^i \a(a_0)\odots a_ia_{i+1} \odots \a(a_n) \\
&\quad+  a_n a_0\ot \a(a_1)\odots \a(a_{n-1}).
\end{split}
\end{align}
Given a Hom-associative algebra $(\Ac,\mu,\a)$ we define the action of the cyclic group $\Zb/(n+1)\Zb$ on $\Ac^{\ot\,n+1}$ as in \cite[Subsect. 2.1.10]{Loday-book}:
\begin{equation}\label{aux-cyclic-group-action}
t_n(a_0\odots a_n) = (-1)^na_n\ot a_0\odots a_{n-1},
\end{equation}
and obtain the cokernel $C_n^{Hom,\,\lambda}(\Ac)= C_n^{Hom}(\Ac)/(\Id-t)$. Next we let
\begin{equation*}
b': C^{Hom}_n(\Ac)\longrightarrow C^{Hom}_{n-1}(\Ac)
\end{equation*}
be defined by
\begin{equation}\label{aux-b-prime}
b'(a_0\odots a_n)= \sum_{i=0}^{n-1}(-1)^i \a(a_0)\ot \a(a_1)\odots a_ia_{i+1} \odots \a(a_n).
\end{equation}
For simplicity we set $t_n=t$.
\begin{lemma}\label{lemma-b-b-prime-t}
The operators \eqref{aux-b-Hochschild}, \eqref{aux-b-prime} and \eqref{aux-cyclic-group-action} satisfy the equality
\begin{equation*}
(\Id - t_{n-1})b' = b(\Id - t_n).
\end{equation*}
\end{lemma}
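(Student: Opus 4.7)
The plan is to reduce this to the standard cyclic-module identity by isolating the wrap-around term of $b$. Writing $b=b'+(-1)^n\d_n$ with
\[
\d_n(a_0\odots a_n)=a_na_0\ot\a(a_1)\odots\a(a_{n-1}),
\]
a direct rewrite shows that the claim $(\Id-t)b'=b(\Id-t)$ is equivalent to
\[
b't-tb'=(-1)^n\d_n(\Id-t),
\]
which is what I would verify.

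The first step is to establish, by expansion on a basic tensor $a_0\odots a_n$, the three commutation relations $\d_i t=-t\d_{i-1}$ for $1\le i\le n-1$, $\d_0 t=(-1)^n\d_n$, and $t\d_{n-1}=-\d_n t$. Each is a matter of observing that the multiplied slot $a_{j-1}a_j$ lands in the same position on both sides, that the untouched slots carry $\a$ in the same places, and that the sign $(-1)^{n-1}$ contributed by $t$ combines correctly with the face-map signs. The second step is then algebraic: peeling the $i=0$ summand off of $b't=\sum_{i=0}^{n-1}(-1)^i\d_i t$ and applying the first relation to the remaining terms gives
\[
b't=(-1)^n\d_n+t\sum_{j=0}^{n-2}(-1)^j\d_j,
\]
while peeling the $j=n-1$ summand off of $tb'$ yields $tb'=t\sum_{j=0}^{n-2}(-1)^j\d_j+(-1)^{n-1}t\d_{n-1}$. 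The sums over $j=0,\ldots,n-2$ cancel upon subtraction, and using the third relation the remainder collapses to $b't-tb'=(-1)^n\d_n-(-1)^n\d_n t=(-1)^n\d_n(\Id-t)$, as required.

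I expect the only real difficulty here to be bookkeeping: tracking the sign $(-1)^{n-1}$ contributed by the cyclic operator and the placement of the $\a$-twists among the tensor factors. There is no genuinely new Hom-associative input for this particular lemma, because $b$ and $b'$ are defined so that $\a$ is applied uniformly to every tensor slot not involved in a multiplication; this decoration is preserved by cyclic permutation, so the classical cyclic-module argument transports verbatim. In particular, neither associativity nor even the multiplicativity $\a(ab)=\a(a)\a(b)$ is invoked; the lemma is a formal consequence of the combinatorics built into the face maps.
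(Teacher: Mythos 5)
Your proof is correct and takes essentially the same route as the paper's: both reduce the identity to the commutation relations $\d_0 t_n=(-1)^n\d_n$ and $\d_i t_n=-t_{n-1}\d_{i-1}$ (your third relation $t\d_{n-1}=-\d_n t$ is just the $i=n$ case of the latter), after which everything is formal summation. Your closing remark that neither Hom-associativity nor multiplicativity of $\a$ is used is also accurate and consistent with the paper's argument.
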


\begin{proof}
We first observe that $\d_0t=(-1)^n\d_n$. Indeed,
\begin{align*}
\d_0t_n(a_0\odots a_n) &= (-1)^n\d_0(a_n\ot a_0\odots a_{n-1})\\
&= (-1)^n a_na_0 \ot\a(a_1)\odots \a(a_{n-1})\\ &= (-1)^n\d_n(a_0\odots a_n).
\end{align*}
We next observe that $\d_it_n=-t_{n-1}\d_{i-1}$ for $0<i\leq n$. Indeed,
\begin{align*}
 \d_it_n (a_0\odots a_n) &= (-1)^n\d_i(a_n\ot a_0\odots a_{n-1})\\
&= (-1)^n\a(a_n)\ot \a(a_0)\odots a_{i-1}a_i\odots  \a(a_{n-1}) \\
&= -(-1)^{n-1}t_{n-1}(\a(a_0)\odots a_{i-1}a_i\odots  \a(a_n)) \\
&= -t_{n-1}\d_{i-1}(a_0\odots a_n).
\end{align*}
\end{proof}

As a result $b:C^{Hom}_n(\Ac)\lra C^{Hom}_{n+1}(\Ac)$ is well-defined on $C^{Hom}_\ast(\Ac)/\Im(\Id-t)$, and hence $(C^{Hom,\lambda}_\ast(\Ac), b)$ is a differential complex. We call the homology of the complex
$$
\begin{CD}
C_0^{Hom,\lambda}(\Ac) @<b<< C_1^{Hom,\lambda}(\Ac) @<b<< C_2^{Hom,\lambda}(\Ac) @<b<< C_3^{Hom,\lambda}(\Ac) \ldots
\end{CD}\\
$$
the cyclic homology of the Hom-associative algebra $\Ac$, and we denote it by $HC^{Hom,\lambda}_\ast(\Ac)$.

\begin{example}
  {\rm
If the Hom-associative algebra $(\Ac, \a)$ is associative, \ie $\a=\Id$, then $HC^{Hom,\lambda}_\ast(\Ac)$ is the ordinary cyclic homology of an associative algebra.
  }
\end{example}

\begin{example}{\rm
    For any   multiplicative Hom-associative algebra $\Ac$ we have
\begin{equation*}
HC_0(\Ac)= HH_0(\Ac)=\frac{\Ac}{[\Ac,\Ac]}.
\end{equation*}
    }
  \end{example}

\begin{example}{\rm
For the Hom-associative algebra $\Ac$ of Example  \ref{unital-example} we have
  \begin{equation*}
  HC^{Hom,\lambda}_1(\Ac)=0.
  \end{equation*}
 Indeed, we note that $C^{Hom,\lambda}_1(\Ac) =\langle e_1\ot e_2\rangle$, and that $b(e_1\ot e_1\ot e_2)=-2 e_1\ot e_2$.
  }
\end{example}

\subsubsection{The cyclic bicomplex for Hom-associative algebras}

We recall that the cyclic homology of a (not necessarily unital) associative algebra $A$ can also be defined as the total homology of a bicomplex \cite{Loday-book,Tsyg83}. We will now take a similar bicomplex approach to the cyclic homology for a Hom-associative algebra. We let
\begin{equation}\label{aux-norm-opt}
N:=1+ t_n +t_n^2 +...+t_n^n:C^{Hom}_n(\Ac)\lra C^{Hom}_n(\Ac)
\end{equation}
and the following lemma follows from the proof of Lemma \ref{lemma-b-b-prime-t}.

\begin{lemma}
The operators \eqref{aux-b-Hochschild}, \eqref{aux-b-prime} and \eqref{aux-norm-opt} satisfy the equality
\begin{equation*}
b'N=Nb.
\end{equation*}
\end{lemma}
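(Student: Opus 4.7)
The identity is the Hom-analog of the classical one (cf.\ Loday, Prop.~2.1.1), and the argument rests only on the two simplicial-type relations
$$\delta_0 t_n = (-1)^n \delta_n, \qquad \delta_i t_n = -t_{n-1}\delta_{i-1} \text{ for } 1\le i\le n,$$
which were derived in the proof of Lemma~\ref{lemma-b-b-prime-t}. Since these hold verbatim in the Hom-associative setting, the classical strategy carries over; my plan is to combine the previous lemma with a telescoping argument.

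First, by iterating the rearrangement $t_{n-1}\delta_j=-\delta_{j+1}t_n$ of the second relation, one obtains the cyclic-transport formula $t_{n-1}^k\delta_0=(-1)^k\delta_k t_n^k$ for $0\le k\le n$, and then, via the first relation,
$$t_{n-1}^k\delta_n \;=\; (-1)^{n+k}\delta_k t_n^{k+1}, \qquad 0\le k\le n.$$
Second, combining $(1-t_{n-1})b'=b(1-t_n)$ from Lemma~\ref{lemma-b-b-prime-t} with the decomposition $b=b'+(-1)^n\delta_n$, one derives the refined commutation
$$b' t_n - t_{n-1} b' \;=\; (-1)^n\delta_n(1-t_n),$$
which iterates (by the usual $\sum_{j=0}^{k-1}t_{n-1}^j(\cdot)t_n^{k-1-j}$ telescoping) to
$$b' t_n^k \;=\; t_{n-1}^k b' + (-1)^n\sum_{j=0}^{k-1}t_{n-1}^j\,\delta_n\,(1-t_n)\,t_n^{k-1-j}, \qquad 0\le k\le n.$$

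Summing over $k=0,\ldots,n$, the inner double sum telescopes in $t_n^\ell$ through $(1-t_n)\sum_{\ell=0}^{m}t_n^\ell = 1-t_n^{m+1}$; invoking the cyclic-transport formula for $t_{n-1}^j\delta_n$, as well as $t_n^{n+1}=1$ and $t_{n-1}^n=1$ at the boundaries of the summation ranges, the double sum collapses to a single expression and yields
$$b'N - Nb' \;=\; \sum_{j=0}^{n-1}(-1)^j\delta_j t_n^{j+1}.$$
On the other hand, the same decomposition $b=b'+(-1)^n\delta_n$ together with the cyclic-transport formula immediately give
$$Nb - Nb' \;=\; (-1)^n N\delta_n \;=\; \sum_{j=0}^{n-1}(-1)^j\delta_j t_n^{j+1}.$$
Subtracting the two equalities yields $b'N=Nb$.

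The main obstacle is purely bookkeeping: applying the cyclic relations $t_n^{n+1}=1$ and $t_{n-1}^n=1$ at precisely the right endpoints so that the telescoping collapses cleanly, and ensuring the many signs produced by the cyclic-transport formula match on both sides of the final comparison. Everything else is a routine calculation that mirrors the proof of the previous lemma.
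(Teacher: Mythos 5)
Your argument is correct --- I checked the bookkeeping, and the two boundary cancellations you flag (the extra $t_{n-1}^{n}b'=b'$ produced by summing $t_{n-1}^{k}b'$ over $0\le k\le n$, against the $-\sum_{j=0}^{n-1}(-1)^{j}\delta_{j}t_n^{n+1}=-b'$ coming out of the telescoped double sum) do cancel exactly, leaving $b'N-Nb'=(-1)^{n}N\delta_{n}=N(b-b')$, which is the identity. However, your route is not the one the paper intends. The paper gives no written proof: it asserts that the lemma ``follows from the proof of Lemma \ref{lemma-b-b-prime-t},'' meaning the classical Loday-style argument in which one expands $b'N=\sum_{i=0}^{n-1}\sum_{j=0}^{n}(-1)^{i}\delta_{i}t_n^{j}$ and $Nb=\sum_{j=0}^{n-1}\sum_{i=0}^{n}(-1)^{i}t_{n-1}^{j}\delta_{i}$ and matches terms bijectively using $\delta_{i}t_n^{j}=(-1)^{j}t_{n-1}^{j}\delta_{i-j}$ for $j\le i$ together with the wrap-around relation $\delta_{0}t_n=(-1)^{n}\delta_{n}$. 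You instead bootstrap from the already-proved identity $(1-t_{n-1})b'=b(1-t_n)$, extract the commutator $b't_n-t_{n-1}b'=(-1)^{n}\delta_{n}(1-t_n)$, and iterate and telescope. The direct reindexing is shorter and is what ``follows from the proof of the previous lemma'' is meant to evoke; your version has the merit of using the \emph{statement} of that lemma (not just the simplicial relations inside its proof) and of isolating the single quantity $(-1)^{n}N\delta_{n}$ as the common discrepancy, but it trades a one-line bijection for delicate endpoint bookkeeping with $t_n^{n+1}=\Id$ and $t_{n-1}^{n}=\Id$. If you keep your version, write out the cancellation of the two boundary terms explicitly, since that is the only place the argument could silently go wrong.
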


As a result, we have the bicomplex $CC^{Hom}_{p,q}(\Ac):=\Ac^{\ot\,q+1}$ as follows:
\begin{align}\label{aux-cyclic-bicomplex}
\begin{CD}
\vdots @.\vdots @.\vdots @.\\
\Ac^{\ot\,3} @ <\Id-t_3 << \Ac^{\ot\,3}  @ < N << \Ac^{\ot\,3} @ <\Id-t_3 << \ldots  \\
@VVbV  @ VV- b'V @VV  b V  \\
\Ac^{\ot\,2} @ <\Id-t_2 << \Ac^{\ot\,2}  @ < N << \Ac^{\ot\,2} @ <\Id-t_2 << \ldots \\
@VV b V @VV- b'V @VV  bV  \\
\Ac  @<\Id-t_0 << \Ac  @ <  N << \Ac @ <\Id-t_0 << \ldots
\end{CD}
\end{align}

Let us denote the total homology of this bicomplex by $HC^{Hom}_\ast(\Ac)$. For more details of the total homology of a bicomplex (double complex) we refer the reader to \cite{Weib-book}.
Our task now is to show that this homology is  the cyclic homology of the Hom-associative algebra $\Ac$.

\begin{proposition}\label{prop-cyclic-homology}
For any Hom-associative algebra $\Ac$ over a field $k$ containing the rational numbers, we have $HC^{Hom}_\ast(\Ac) = HC^{Hom,\lambda}_\ast(\Ac)$.
\end{proposition}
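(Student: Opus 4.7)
The plan is to compute the total homology of the cyclic bicomplex \eqref{aux-cyclic-bicomplex} via the spectral sequence associated with the filtration by rows, $F_q \mathrm{Tot}_\ast = \bigoplus_{s \leq q} CC^{Hom}_{\ast, s}(\Ac)$. Since each total degree $n$ receives contributions only from rows $s$ with $0 \leq s \leq n$, the filtration is bounded in each total degree and the spectral sequence converges to $HC^{Hom}_\ast(\Ac)$. On the initial page only the horizontal differential survives, so the $E^1$ term at bicomplex position $(r, s)$ is the $r$th homology of the $s$th row
$$\Ac^{\ot\, s+1} \xleftarrow{\Id - t_s} \Ac^{\ot\, s+1} \xleftarrow{N} \Ac^{\ot\, s+1} \xleftarrow{\Id - t_s} \Ac^{\ot\, s+1} \xleftarrow{N} \cdots.$$

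This row is the standard $2$-periodic resolution computing the Tate cohomology of the finite cyclic group $\Zb/(s+1)\Zb$ acting on $\Ac^{\ot\, s+1}$ via $t_s$; the identities $N(\Id - t_s) = 0 = (\Id - t_s) N$ are formal consequences of $t_s^{s+1} = \Id$. Because $\Qb \subseteq k$, this row is exact at every position $r \geq 1$: if $x \in \ker(\Id - t_s)$ then $N(x) = (s+1) x$, so $x = \tfrac{1}{s+1} N(x) \in \Im(N)$; and if $x \in \ker(N)$ then $(\Id - t_s)(y) = x$ with $y = -\tfrac{1}{s+1} \sum_{i=0}^{s} (i+1)\, t_s^{i} x$. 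At $r = 0$ one obtains the cokernel $\Ac^{\ot\, s+1}/\Im(\Id - t_s) = C^{Hom, \lambda}_s(\Ac)$.

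Thus the $E^1$ page is concentrated in the column $r = 0$, where it reads $C^{Hom, \lambda}_s(\Ac)$, and the induced $d^1$ differential is the one coming from the vertical map $b$, which descends to $C^{Hom, \lambda}_\ast(\Ac)$ by Lemma \ref{lemma-b-b-prime-t}. Consequently $E^2_{r, s} = HC^{Hom, \lambda}_s(\Ac)$ for $r = 0$ and vanishes otherwise, so the spectral sequence collapses at the $E^2$ page and yields the claimed isomorphism $HC^{Hom}_n(\Ac) \cong HC^{Hom, \lambda}_n(\Ac)$. The only delicate ingredient is the vanishing of the higher horizontal homology, which is exactly where the hypothesis $\Qb \subseteq k$ enters; this step is a purely group-theoretic statement about the cyclic action on tensor powers and is therefore insensitive to the Hom-associativity structure of $\Ac$, the structural input being only the bicomplex property already established via Lemma \ref{lemma-b-b-prime-t} and the companion identity $b' N = N b$.
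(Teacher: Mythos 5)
Your proof is correct and follows essentially the same route as the paper: both compute the total homology of the bicomplex \eqref{aux-cyclic-bicomplex} by first taking homology of the rows, showing (using $\Qb\subseteq k$) that each row is exact except at the edge, where it gives $C^{Hom,\lambda}_\ast(\Ac)$, so the spectral sequence collapses to $HC^{Hom,\lambda}_\ast(\Ac)$. The only cosmetic difference is that the paper establishes row-exactness via the explicit homotopy built from $\t=\Id+t_n+2t_n^2+\cdots+nt_n^n$ and the identity $N+\t(\Id-t_n)=(n+1)\Id$, whereas you verify $\ker=\Im$ directly at each spot; the two verifications are equivalent.
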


\begin{proof}
We first observe that the rows of the bicomplex $CC^{Hom}_{p,q}(\Ac)$ are acyclic. Indeed, letting $\t:=\Id+t_n+2t_n^2+\ldots+nt_n^n$, we obtain at once that $N+\t(\Id-t_n)=(n+1)\Id$, and hence the operators $(1/n+1)\Id:\Ac^{\ot\,n+1}\lra \Ac^{\ot\,n+1}$ and $(1/n+1)\t:\Ac^{\ot\,n+1}\lra \Ac^{\ot\,n+1}$ define a homotopy
$$
\xymatrix{
\Ac^{\ot\,n+1}\ar[d]^{\Id}\ar[dr]^{\frac{\t}{n+1}} & \ar[l]_{\Id-t_n}\Ac^{\ot\,n+1}\ar[d]^{\Id}\ar[dr]^{\frac{\Id}{n+1}} & \ar[l]_{N}\Ac^{\ot\,n+1}\ar[d]^{\Id}\ar[dr]^{\frac{\t}{n+1}} & \ar[l]_{\Id-t_n}\Ac^{\ot\,n+1}\ar[d]^{\Id} \\
\Ac^{\ot\,n+1} & \ar[l]^{\Id-t_n}\Ac^{\ot\,n+1} & \ar[l]^{N}\Ac^{\ot\,n+1} & \ar[l]^{\Id-t_n}\Ac^{\ot\,n+1}
}
$$
on the rows of the bicomplex \eqref{aux-cyclic-bicomplex} from $\Id:\Ac^{\ot\,n+1}\lra \Ac^{\ot\,n+1}$ to $0:\Ac^{\ot\,n+1}\lra \Ac^{\ot\,n+1}$. As a result of the acyclicity of the rows, we obtain
\begin{equation*}
E^1_{p,q}=\left\{\begin{array}{cc}
                   0, & {\rm if}\,\,p>0 \\
                   C^{Hom,\lambda}_q, & {\rm if} \,\,p=0
                 \end{array}
\right.
\end{equation*}
on the $E^1$-page of the spectral sequence associated to the filtration of the bicomplex \eqref{aux-cyclic-bicomplex} via columns. Therefore
\begin{equation*}
E^2_{p,q}=\left\{\begin{array}{cc}
                   0, & {\rm if}\,\,p>0 \\
                   H^{Hom,\lambda}_q, & {\rm if} \,\,p=0.
                 \end{array}
\right.
\end{equation*}
Thus the claim follows since the spectral sequence degenerates at this level.
\end{proof}

One can extend the bicomplex \eqref{aux-cyclic-bicomplex}  to
\begin{align}
\begin{CD}
\vdots @.\vdots @.\vdots @.\\
\ldots \Ac^{\ot\,3} @ <\Id-t_2 << \Ac^{\ot\,3}  @ < N << \Ac^{\ot\,3} @ <\Id-t_2 << \ldots  \\
@VVbV  @ VV- b'V @VV  b V  \\
\ldots \Ac^{\ot\,2} @ <\Id-t_1 << \Ac^{\ot\,2}  @ < N << \Ac^{\ot\,2} @ <\Id-t_1 << \ldots \\
@VV b V @VV- b'V @VV  bV  \\
\ldots \Ac  @<\Id-t_0 << \Ac  @ <  N << \Ac @ <\Id-t_0 << \ldots
\end{CD}
\end{align}

The homology of the total complex of  this bicomplex is called the periodic cyclic homology of the Hom-associative algebra $\Ac$, and is denoted by $HP^{Hom}_*(\Ac)$. Just as in the associative case, we have only two periodic cyclic groups, the even $HP^{Hom}_0(\Ac)$, and the odd $HP^{Hom}_1(\Ac)$.

\begin{remark}{\rm
The cyclic homology of a non-unital Hom-associative algebra $\Ac$ can not be computed by the Connes' $(B,b)$-bicomplex \cite{Loday-book}, as the operator $B$ requires the unit. If, however, $\Ac$ is unital, then we can define a cyclic module structure on $C_\ast^{Hom}(\Ac)$ by the faces
\begin{align*}
& \d_i:C_n^{Hom}(\Ac) \lra C_{n-1}^{Hom}(\Ac),\qquad 0 \leq i \leq n\\
&\delta_i(a_0 \odots  a_n) = \alpha(a_0) \odots  a_i a_{i+1}\odots\alpha( a_n), \quad {0 \leq i < n}\\
&\delta_n(a_0 \odots  a_n)=   a_n a_0\otimes \alpha(a_1)\odots \alpha(a_{n-1}),
\end{align*}
the degeneracies
\begin{align*}
& C_n^{Hom}(\Ac) \lra C_{n+1}^{Hom}(\Ac), \qquad 0 \leq i \leq n\\
&\sigma_j (a_0\odots a_n)= a_0\odots a_j \otimes 1 \otimes a_{j+1}\odots a_n, \\
\end{align*}
and the cyclic operator
\begin{align*}
& t_n:C_n^{Hom}(\Ac) \lra C_{n}^{Hom}(\Ac),\\
&t_n (a_0\odots a_n)=a_n\otimes a_0\odots a_{n-1},
\end{align*}

and hence we can associate a $(b,B)$-bicomplex as follows
$$\begin{CD}
\vdots @.\vdots @.\vdots \\
C_2^{Hom}(\Ac) @<B<< C_1^{Hom}(\Ac) @<B<< C_0^{Hom}(\Ac) \\
@VbVV @VbVV\\
C_1^{Hom}(\Ac)@<B<<C_0^{Hom}(\Ac)\\
@VbVV\\
C_0^{Hom}(\Ac)
\end{CD} $$
where $B=(1-\lambda) s N   ,$ and $s= \tau_{n+1}\sigma_n:C_n \ra C_{n+1} .$
 We note that the total homology of the $(b,B)$-bicomplex is the same as the homology of the cyclic bicomplex \eqref{aux-cyclic-bicomplex}.
}
\end{remark}

We conclude this subsection by a short note on the functoriality of the Hom-associative extensions of the Hochschild and the cyclic homology theories.

\begin{proposition}
 Let $(\Ac, \a_{\Ac})$ and $(\Bc, \a_{\Bc})$ be two Hom-associative algebras. Then a morphism $f: \Ac\longrightarrow \Bc$ of Hom-associative algebras induces the maps
\begin{equation*}
f': HH^{Hom}_n(\Ac)\longrightarrow HH^{Hom}_n(\Bc), \qquad f'':HC^{Hom}_n(\Ac)\longrightarrow HC^{Hom}_n(\Bc),
\end{equation*}
 given by $a_0\ot \cdots \ot a_n\mapsto f(a_0)\ot \cdots \ot f(a_n)$. Furthermore if $\Ac$ and $\Bc$ are isomorphic as Hom-associative algebras then $HH^{Hom}_n(\Ac)\cong HH^{Hom}_n(\Bc)$ and $HC^{Hom}_n(\Ac)\cong HC^{Hom}_n(\Bc)$.
\end{proposition}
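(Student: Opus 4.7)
The plan is to verify that the map $f^{\otimes(n+1)}:\Ac^{\otimes(n+1)}\to\Bc^{\otimes(n+1)}$, sending $a_0\otimes\cdots\otimes a_n$ to $f(a_0)\otimes\cdots\otimes f(a_n)$, defines a morphism of presimplicial modules between $C^{Hom}_\ast(\Ac)$ and $C^{Hom}_\ast(\Bc)$, and that it is compatible with the cyclic operator $t_n$. Passing to homology (and to cyclic coinvariants for the cyclic case) then yields the induced maps, and isomorphisms of Hom-associative algebras induce isomorphisms at each stage since $f^{\otimes(n+1)}$ is then a bijection with inverse $(f^{-1})^{\otimes(n+1)}$.

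First I would check that $f^{\otimes(n+1)}\circ\delta_i=\delta_i\circ f^{\otimes(n+1)}$ for $0\leq i\leq n$. For the inner faces ($0<i<n$) this reduces to the two identities $f(a_ia_{i+1})=f(a_i)f(a_{i+1})$ and $f(\alpha_{\Ac}(a_j))=\alpha_{\Bc}(f(a_j))$, both of which hold because $f$ is a morphism of Hom-associative algebras. The cases $i=0$ and $i=n$ are identical since the multiplication wraps around in the same way. Summing with the alternating signs gives $f^{\otimes(n+1)}\circ b=b\circ f^{\otimes(n+1)}$, so $f^{\otimes(n+1)}$ is a chain map and descends to a well-defined map $f':HH^{Hom}_n(\Ac)\to HH^{Hom}_n(\Bc)$.

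Next I would verify that $f^{\otimes(n+1)}\circ t_n=t_n\circ f^{\otimes(n+1)}$, which is immediate from the definition \eqref{aux-cyclic-group-action} since $f$ is linear and the signs match. Therefore $f^{\otimes(n+1)}$ respects $\Id-t_n$ and $N=1+t_n+\cdots+t_n^n$, and likewise commutes with $b'$ by the same argument used for $b$ (the formula \eqref{aux-b-prime} involves only $\alpha$ and $\mu$). Consequently $f^{\otimes(n+1)}$ induces a morphism of the cyclic bicomplex \eqref{aux-cyclic-bicomplex} of $\Ac$ to that of $\Bc$, yielding the induced map $f'':HC^{Hom}_n(\Ac)\to HC^{Hom}_n(\Bc)$ on total homology (equivalently, on the cokernel description, by Proposition \ref{prop-cyclic-homology}).

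For the final assertion, if $f$ is an isomorphism of Hom-associative algebras then $f^{-1}$ is also a morphism of Hom-associative algebras, so it induces chain maps in the opposite direction. Since $(f^{-1})^{\otimes(n+1)}\circ f^{\otimes(n+1)}=\Id$ and vice versa at the level of chain complexes, the induced maps on $HH^{Hom}_n$ and $HC^{Hom}_n$ are mutually inverse, giving the claimed isomorphisms. No step is genuinely difficult here; the only point demanding care is checking the face-map compatibility on both the extremal faces $\delta_0,\delta_n$ and the cyclic operator simultaneously so that the map passes to both the Hochschild differential and the cyclic quotient, but all of these follow directly from the two defining properties of a morphism of Hom-associative algebras.
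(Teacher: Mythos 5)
Your proof is correct and follows the same route as the paper: the paper's own proof is a one-sentence observation that the induced maps commute with the Hochschild boundary and respect the cyclic group action, which is exactly what you verify (in more detail) via compatibility with the face maps, $t_n$, $b'$, and $N$. The isomorphism argument via $(f^{-1})^{\otimes(n+1)}$ is likewise the intended one.
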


\begin{proof}
The claim follows from the fact that the induced maps commutes with the Hochschild boundary map, and that they respect the cyclic group action on the Hochschild complex.
\end{proof}

\section{Cohomology of Hom-associative algebras}

In this section we define the Hochschild and cyclic cohomologies of Hom-associative algebras.

\subsection{Hochschild cohomology of a Hom-associative\\ algebra}

In this subsection we introduce the Hochschild cohomology of a Hom-associative algebra $\Ac$ with coefficients in a dual $\Ac$-bimodule.

\begin{theorem}
  Let $(\Ac,  \alpha)$ be a Hom-associative algebra and $(V, \beta)$ be a dual $\Ac$-bimodule. Let $C^n_{Hom}(\Ac, V)$ be the space of all $k$-linear maps $\varphi: \Ac^{\ot n}\longrightarrow V$. Then
\begin{equation*}
C_{Hom}^\ast(\Ac, V)=\bigoplus_{n\geq0} C_{Hom}^n(\Ac, V),
\end{equation*}
with the operators
\begin{align}\label{aux-cosimplisial-structure-vp}
\begin{split}
&\d_0\varphi(a_1\odots a_{n+1})=a_1\cdot \varphi(\a(a_2)\odots \a(a_{n+1}))\\
&\d_i\varphi(a_1\odots a_{n+1})=\b(\varphi(\a(a_1)\odots a_i a_{i+1}\odots \a(a_{n+1}))), ~~ 1\leq i \leq n\\
&\d_{n+1}\varphi(a_1\odots a_{n+1})= \varphi(\a(a_1)\odots \a(a_{n}))\cdot a_{n+1}\\
\end{split}
\end{align}
is a pre-cosimplicial module.
\end{theorem}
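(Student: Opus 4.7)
The plan is to verify the pre-cosimplicial identities $\delta_j \delta_i = \delta_i \delta_{j-1}$ for all indices $0 \leq i < j \leq n+2$ and all $\varphi \in C_{Hom}^n(\Ac, V)$. The proof is the cohomological mirror of Theorem 3.1, with the module axioms for $V$ replaced by the dual module axioms. The argument splits naturally into interior cases (which are driven by the Hom-associativity and multiplicativity of $\Ac$) and boundary cases (which are driven by the dual left/right module and dual bimodule axioms on $V$).

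First I would dispose of the interior identities $\delta_j \delta_i = \delta_i \delta_{j-1}$ with $1 \leq i < j \leq n+1$. Both compositions produce $\beta^2$ applied to $\varphi$ evaluated on a tuple of $\Ac$-elements. When $j > i+1$ the two multiplications $a_i a_{i+1}$ and $a_{j-1} a_j$ occur at disjoint positions, and the outer $\alpha$'s on the remaining slots match by the multiplicativity $\alpha(ab)=\alpha(a)\alpha(b)$. When $j = i+1$ three consecutive factors $a_i,a_{i+1},a_{i+2}$ appear bundled, and the required equality between $(a_i a_{i+1})\alpha(a_{i+2})$ and $\alpha(a_i)(a_{i+1} a_{i+2})$ is supplied by Hom-associativity \eqref{aux-Hom-assoc-cond}.

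Next I would handle the interactions involving $\delta_0$. The borderline identity $\delta_1 \delta_0 \varphi = \delta_0 \delta_0 \varphi$ reduces, after a direct calculation using multiplicativity, to
\begin{equation*}
a_1 \cdot \big(\alpha(a_2) \cdot v\big) \;=\; \beta\big((a_1 a_2) \cdot v\big), \qquad v := \varphi(\alpha^2(a_3), \ldots, \alpha^2(a_{n+2})),
\end{equation*}
which is precisely the dual left $\Ac$-module axiom. For $0 = i < j$ with $j > 1$ both sides extract $a_1 \cdot (\cdots)$ and the interior multiplication $a_{j-1}a_j$ lands in the same slot on both sides, so the identity follows from multiplicativity. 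Symmetrically, the identities involving $\delta_{n+2}$ reduce, in the extremal case $i=n+1,\, j=n+2$, to the dual right $\Ac$-module axiom
\begin{equation*}
\big(w \cdot \alpha(a_{n+1})\big) \cdot a_{n+2} \;=\; \beta\big(w \cdot (a_{n+1} a_{n+2})\big),
\end{equation*}
and for smaller $i$ to another application of multiplicativity.

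The main obstacle is the wrap-around identity $\delta_{n+2} \delta_0 \varphi = \delta_0 \delta_{n+1} \varphi$, in which the left action sits on one side of $\varphi$ while the right action sits on the other. Unwinding the definitions reduces this case to an equality of the form
\begin{equation*}
\big(\alpha(a_1) \cdot v\big) \cdot a_{n+2} \;=\; a_1 \cdot \big(v \cdot \alpha(a_{n+2})\big), \qquad v := \varphi(\alpha^2(a_2), \ldots, \alpha^2(a_{n+1})),
\end{equation*}
which is exactly the compatibility encoded in the dual $\Ac$-bimodule axiom. Once this is checked, all the pre-cosimplicial identities hold and the theorem is proved. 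I expect the book-keeping of the iterates of $\alpha$ and $\beta$ (ensuring that the outer twists produced by each $\delta_i$ line up across the two compositions) to be the most error-prone part of the verification, so I would record it in a single table of cases before turning to the algebraic manipulations.
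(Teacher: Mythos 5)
Your overall strategy is exactly the paper's: a case-by-case verification of the pre-cosimplicial identities in which the interior cases are settled by multiplicativity and Hom-associativity, the two extreme boundary cases ($\d_1\d_0=\d_0\d_0$ and $\d_{n+2}\d_{n+1}=\d_{n+1}\d_{n+1}$) by the dual left and right module axioms, and the wrap-around case $\d_{n+2}\d_0=\d_0\d_{n+1}$ by the dual bimodule compatibility. Your identification of which axiom drives which case, including the precise form $(\a(a_1)\cdot v)\cdot a_{n+2}=a_1\cdot(v\cdot\a(a_{n+2}))$ needed in the wrap-around case, coincides with the computations the paper actually writes out (the paper's index labels are shifted, but the content is the same).

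There is, however, one family of cases where your stated justification would not close as written: the mixed identities $\d_j\d_0=\d_0\d_{j-1}$ for $1<j\leq n+1$, and symmetrically $\d_{n+2}\d_i=\d_i\d_{n+1}$ for $1\leq i\leq n$. After multiplicativity has been used to identify the arguments of $\vp$ on the two sides, what remains is
\begin{equation*}
\b\bigl(\a(a_1)\cdot w\bigr)\quad\text{versus}\quad a_1\cdot\b(w),
\qquad w=\vp(\a^2(a_2)\odots\a(a_ja_{j+1})\odots\a^2(a_{n+2})),
\end{equation*}
so one still needs $\b(\a(a)\cdot v)=a\cdot\b(v)$ (and its right-handed analogue) to move $\b$ past the action; this is not a consequence of multiplicativity, nor of the dual-module axioms as the paper states them. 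It is the cohomological counterpart of the compatibility $\b(a\cdot v)=\a(a)\cdot\b(v)$, $\b(v\cdot a)=\b(v)\cdot\a(a)$ that the authors impose as an explicit hypothesis in the homology theorem, and it does hold in the motivating example $V=W^\ast$ of Lemma \ref{property} under that hypothesis on $W$. To be fair, the paper's own proof hides exactly these cases behind ``the rest of the relations are similar,'' so your proposal is no less complete than the published argument; but if you carry out the table of cases you propose, you will find that these entries require the extra $\b$-equivariance rather than multiplicativity, and that hypothesis should be recorded in the statement.
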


\begin{proof}
We show that $\delta_i \delta_j= \delta_j \delta_{i-1}$ for $0\leq j< i \leq n-1$. Let us first show that $\d_1\d_0=\d_0\d_0$. Indeed,
\begin{align*}
\d_0(\d_0\varphi)(a_1\odots a_{n+2}) &=a_1\cdot \d_0\varphi(\a(a_2)\odots \a(a_{n+2}))\\
&= a_1\cdot (\a(a_2)\cdot\varphi(\a^2(a_3)\odots \a^2(a_{n+2}))) \\
& =\b((a_1a_2)\cdot\vp(\a^2(a_3)\odots \a^2(a_{n+2}))) \\
&= \b(\d_0\vp(a_1a_2\ot \a(a_3)\odots \a(a_{n+2})))\\
&=\d_1\d_0\vp(a_1\odots a_{n+2}).
\end{align*}
We used the  left dual module property   in the third equality. One can similarly use the right dual module property to show that $\delta_{n+1} \delta_n= \delta_n \delta_{n}$.
The following demonstrates that $\d_{n+1}\d_0=\d_0\d_{n}$. We have
\begin{align*}
  \d_{n+1}\d_0\varphi(a_1\odots a_{n+2})  &=\d_0\varphi(\a(a_1)\odots \a(a_{n+1}))\cdot a_{n+2}\\
  &=(\a(a_1)\cdot \varphi(\a^2(a_2)\odots \a^2(a_{n+1})))\cdot a_{n+2}\\
  &=a_1 \cdot(\varphi(\a^2(a_2)\odots \a^2(a_{n+1}))\cdot \a(a_{n+2}))\\
  &=a_1\cdot \d_n\varphi(\a(a_2)\odots \a(a_{n+2}))\\
  &=\d_0\d_n\varphi(a_1\odots a_{n+2}).
\end{align*}
The relation $ \delta_{j+1} \delta_j= \delta_j \delta_{j}$ follows from the Hom-associativity of $\Ac$. The rest of the relations are similar.
\end{proof}

The cohomology of the complex $(C_{Hom}^\ast(\Ac, V),b)$ is called the Hochschild cohomology of the Hom-associative algebra $\Ac$ with coefficients in a dual $\Ac$-bimodule $V$, and is denoted by $H_{Hom}^\ast(\Ac, V)$.

In particular, if $V$ is an $\Ac$-bimodule, then $(C_{Hom}^\ast(\Ac, V^\ast),b)$ is a differential complex with
\begin{align*}
b:C_{Hom}^n(\Ac, V^\ast)&\lra C_{Hom}^{n+1}(\Ac, V^\ast), \\
b\varphi(a_1\odots a_{n+1})&= a_1 \cdot \varphi(\a(a_2)\odots \a(a_{n+1}))\\
&\quad+\sum_{i=1}^n (-1)^{i}\varphi(\a(a_1)\odots a_ia_{i+1}\odots \a(a_{n+1}))\\
&\quad+\varphi(\a(a_1)\odots \a(a_n))\cdot a_{n+1}.
\end{align*}

Moreover, we have the following result.

\begin{proposition}\label{relation-Hoch}
For any Hom-associative algebra  $(\Ac,\mu,  \alpha)$, we have
\begin{equation*}
  H_n^{Hom}(\Ac, \Ac)^*\cong H^n_{Hom}(\Ac, \Ac^*).
\end{equation*}
\end{proposition}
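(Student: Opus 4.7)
My plan is to exhibit a degree-wise isomorphism of cochain complexes
\[\Psi:C_{Hom}^\ast(\Ac,\Ac^\ast)\xrightarrow{\;\cong\;}\Hom_k(C^{Hom}_\ast(\Ac,\Ac),k)\]
via the hom--tensor adjunction $\Hom_k(\Ac^{\ot n},\Ac^\ast)\cong \Hom_k(\Ac^{\ot(n+1)},k)$, and then to pass to (co)homology using the exactness of $\Hom_k(-,k)$ over a field.

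In degree $n$ the map is
\[\Psi_n(\vp)(a_0\ot a_1\odots a_n):=\vp(a_1,\ldots,a_n)(a_0),\]
which is a vector-space isomorphism by the adjunction. The main task is to verify that $\Psi=(\Psi_n)_{n\geq 0}$ is a chain map, that is $\Psi_{n+1}\circ\delta=b^\ast\circ\Psi_n$, where $\delta$ is the Hochschild coboundary on $C_{Hom}^\ast(\Ac,\Ac^\ast)$ built from the faces \eqref{aux-cosimplisial-structure-vp} and $b^\ast$ is the dual of the Hochschild boundary \eqref{aux-b-Hochschild}. Since both differentials are signed sums of face maps with matching indices, it suffices to match, face by face, each cosimplicial $\delta_i$ on the cochain side with the dual of the simplicial $\delta_i$ on the chain side.

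For the extremal faces I would invoke the bimodule structure on $\Ac^\ast$ from Lemma \ref{property}, which after specialising $V=\Ac$ reads $(a\cdot f)(x)=f(xa)$ and $(f\cdot a)(x)=f(ax)$. Then
\[\Psi(\delta_0\vp)(a_0\ot a_1\odots a_{n+1})=\vp(\a(a_2),\ldots,\a(a_{n+1}))(a_0 a_1),\]
which is precisely $\Psi(\vp)$ evaluated on the image of $a_0\ot a_1\odots a_{n+1}$ under the homology face $\delta_0$; the symmetric computation at the other end matches $\delta_{n+1}$ via $(f\cdot a)(x)=f(ax)$. For the interior faces, the twist on $\Ac^\ast$ induced by Lemma \ref{property} is $\b=\a^\ast$, precomposition with $\a$, so $\Psi(\delta_i\vp)$ evaluates $\vp$ at $\a(a_0)$ in the coefficient slot, matching the dual of the interior homology face, which applies $\a$ to the leading tensor. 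Summing the signed faces yields $\Psi\delta=b^\ast\Psi$.

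Once $\Psi$ is established as an isomorphism of cochain complexes, the conclusion is formal: taking cohomology gives $H_{Hom}^n(\Ac,\Ac^\ast)\cong H^n(\Hom_k(C^{Hom}_\ast(\Ac,\Ac),k))$, and exactness of $\Hom_k(-,k)$ over a field commutes cohomology with dualization, yielding $H^n(\Hom_k(C^{Hom}_\ast(\Ac,\Ac),k))\cong H_n^{Hom}(\Ac,\Ac)^\ast$. The only nontrivial step is the face-by-face bookkeeping: one must confirm that the particular bimodule action constructed in Lemma \ref{property} is exactly what makes the hom--tensor adjunction intertwine the two differentials, but nothing deeper is involved.
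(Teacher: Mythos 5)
Your proposal is correct and follows essentially the same route as the paper: the paper's proof exhibits exactly your map $f(\varphi)(a_0\ot\cdots\ot a_n)=\varphi(a_1\ot\cdots\ot a_n)(a_0)$ as a chain isomorphism $C^n_{Hom}(\Ac,\Ac^\ast)\to C^{Hom}_n(\Ac,\Ac)^\ast$ commuting with the differentials, and then concludes by dualizing over the field. Your face-by-face verification and the explicit appeal to exactness of $\Hom_k(-,k)$ merely spell out details the paper leaves implicit.
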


\begin{proof}
The claim follows at once by observing that the isomorphisms
\begin{equation*}
  \Hom(M\ot \Ac^{\ot n}, k)\cong \Hom(\Ac^{\ot n}, \Hom(M, k))
\end{equation*}
are compatible with the differentials when $M=\Ac$. In fact the diagram
\begin{align*}
\xymatrix{
C^n_{Hom}(\Ac, \Ac^*)     \ar[r]^{\,\,\,\,\,f} \ar[d]_{d^n} &  C^{Hom}_n(\Ac, \Ac)^*       \ar[d]^{d_n^*}\\
C^{n+1}_{Hom}(\Ac, \Ac^*)  \ar[r]_{f} & C^{Hom}_{n+1}(\Ac, \Ac)^*
}
\end{align*}
is commutative where $f:C^n_{Hom}(\Ac, \Ac^*)\longrightarrow C^{Hom}_n(\Ac, \Ac)^*$ is given by $$f(\varphi)(a_0 \odots a_n)=\varphi(a_1\odots a_n)(a_0).$$
\end{proof}

\subsection{Cyclic cohomology of a Hom-associative algebra}

In this subsection we introduce the cyclic cohomology for Hom-associative algebras via two different but equivalent methods.

\subsubsection{Cyclic cohomology as a kernel}

In this subsection we generalize the cyclic cohomology to the setting of Hom-associative algebras along the lines of \cite{Conn85}. Recalling that $V=\Ac$ is naturally an $\Ac$-bimodule, we will consider the Hochschild cohomology of $\Ac$ with coefficients in the dual $\Ac$-bimodule $\Ac^\ast$.
Identifying $\vp\in C^n(A,A^\ast)$ with
\begin{equation*}\label{aux-identification}
\phi:A^{\ot\,n+1}\lra k,\qquad \phi(a_0\ot \a_1\odots a_n):=\vp(a_1 \odots a_n)(a_0),
\end{equation*}
 the coboundary map corresponds to
\begin{align*}
b:C^n(\Ac,\Ac^\ast)&\lra C^{n+1}(\Ac,\Ac^\ast),\\
b\phi(a_0\odots a_{n+1})&=\phi(a_0a_1\ot \a(a_2) \odots \a(a_{n+1}))\\
&\quad+\sum_{j=1}^n (-1)^j\phi(\a(a_0) \odots a_ja_{j+1} \odots \a(a_{n+1}))\\
&\quad+(-1)^{n+1} \phi(a_{n+1}a_0\ot \a(a_1) \odots \a(a_n)).
\end{align*}
Moreover, the pre-cosimplicial structure is translated into
\begin{align*}
\begin{split}
    &\d_0\phi(a_0 \odots a_n)= \phi(a_0a_1\ot \a(a_2) \odots \a(a_{n}))\\
    &\d_i\phi(a_0 \odots a_n)=\phi(\a(a_0) \odots a_i a_{i+1} \odots \a(a_{n})), ~~ 1\leq i \leq n-1\\
    &\d_{n}\phi(a_0 \odots a_n)=\phi(a_{n}a_0\ot \a(a_1) \odots \a(a_n)).
\end{split}
\end{align*}
These maps are dual to the ones for $C_n^{Hom}(\Ac, \Ac)$.
We define
\begin{align}\notag
&  t_n: C^n_{Hom}(\Ac, \Ac^*)\longrightarrow C^n_{Hom}(\Ac, \Ac^*) \\
& t_n\phi(a_0\ot a_1\odots a_n):=(-1)^n\phi(a_n\ot a_0\ot a_1\odots a_{n-1}),\label{aux-tau-cohom}
\end{align}
and we set
\begin{align*}
\begin{split}
C^n_{\lambda,Hom}(\Ac,\Ac^\ast) &= \ker(\Id-t_n)\\
&= \{\phi\in C^n_{Hom}(\Ac,\Ac^\ast)\,|\,\phi(a_0\ot a_1\odots a_n)\\&\qquad\qquad\qquad\qquad\qquad=(-1)^n\phi(a_n\ot a_0\odots a_{n-1})\}.
\end{split}
\end{align*}
The following lemma shows that $C^\ast_{\lambda,Hom}(\Ac,\Ac^\ast)$ is a subcomplex of the Hochschild complex $C^\ast_{Hom}(\Ac,\Ac^\ast)$.

\begin{lemma}
If $\phi \in C^n_{\lambda,Hom}(\Ac,\Ac^\ast)$, then $b\phi \in C^{n+1}_{\lambda,Hom}(\Ac,\Ac^\ast)$.
\end{lemma}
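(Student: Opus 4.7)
The plan is to reproduce at the cohomological level the face--cyclic operator identities of Lemma \ref{lemma-b-b-prime-t}, and then combine them with the hypothesis $t_n\phi = \phi$ to verify $t_{n+1}(b\phi) = b\phi$.

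First, I would establish
\begin{equation*}
t_{n+1}\d_0 = (-1)^{n+1}\d_{n+1}, \qquad t_{n+1}\d_j = -\d_{j-1}t_n \;\text{for}\; 1 \le j \le n+1.
\end{equation*}
Both follow by a direct computation on a test tensor $a_0 \odots a_{n+1}$, using the definition \eqref{aux-tau-cohom} of $t$ together with the translated cosimplicial coface formulas for $\phi$ recorded just before the statement. The first identity records the fact that cycling $a_{n+1}$ to the front and then applying $\d_0$ (which multiplies the first two slots) yields the same expression as applying $\d_{n+1}$ directly. The second records that an interior multiplication $a_ja_{j+1}$ is transported to $a_{j-1}a_j$ by one cyclic shift, with the overall sign $-1$ accounting for the discrepancy between the cyclic signs $(-1)^n$ and $(-1)^{n+1}$ in consecutive degrees. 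The Hom-associative twisting by $\a$ poses no additional difficulty, since $\a$ is applied symmetrically in all positions not participating in the multiplication, and a cyclic shift merely permutes those positions.

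Once these identities are in place, the verification is formal:
\begin{align*}
t_{n+1}b\phi &= t_{n+1}\d_0\phi + \sum_{j=1}^{n+1}(-1)^j t_{n+1}\d_j\phi \\
&= (-1)^{n+1}\d_{n+1}\phi - \sum_{j=1}^{n+1}(-1)^j \d_{j-1}t_n\phi \\
&= (-1)^{n+1}\d_{n+1}\phi + \sum_{k=0}^{n}(-1)^k \d_k(t_n\phi),
\end{align*}
where the last step is the reindexing $k = j-1$. Invoking the hypothesis $t_n\phi = \phi$ and recombining the sum yields $\sum_{j=0}^{n+1}(-1)^j \d_j\phi = b\phi$, as required.

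The only real obstacle is the sign and index bookkeeping: one must track carefully that the two extremal cofaces $\d_0$ and $\d_{n+1}$ (which involve multiplications crossing the tensor boundary) transform into each other under $t_{n+1}$, whereas the interior cofaces $\d_1,\ldots,\d_n$ permute cyclically among themselves. Since the cosimplicial structure and the definition of $t$ are already fixed, and $\a$ is multiplicative, no deeper analysis is needed beyond carefully expanding both sides on a test element.
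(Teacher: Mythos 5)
Your proposal is correct and follows essentially the same route as the paper: establish the commutation identities between the cyclic operator and the cofaces ($t_{n+1}\d_0 = (-1)^{n+1}\d_{n+1}$ and $t_{n+1}\d_j = -\d_{j-1}t_n$ for $1 \le j \le n+1$) and then reindex and invoke $t_n\phi=\phi$. If anything, your signed form of the extremal identity is the internally consistent one — the paper records it as $t_n\d_0=\d_n$, silently dropping the $(-1)^n$ coming from the definition of $t_n$, and omits the final formal recombination that you spell out.
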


\begin{proof}
We will show that
\begin{equation*}\label{aux-pre-cosimplicial}
t_n\d_i  = -\d_{i-1}t_{n-1}, \quad 1 \leq i \leq n, \qquad  \quad \tau_{n}\d_0 =\d_{n}.
\end{equation*}
First we check that
\begin{align*}
\begin{split}
t_n\d_0\phi(a_0 \odots a_n) &= \d_0\phi(a_n\ot a_0 \odots a_{n-1}) \\
&= \phi(a_na_0\ot \a(a_1) \odots \a(a_{n-1}))\\
& = \d_n\phi(a_0 \odots a_n),
\end{split}
\end{align*}
and next we observe that
\begin{align*}
\begin{split}
 t_{n+1}\d_i\phi(a_0 \odots a_{n+1}) &= \d_i\phi(a_{n+1}\ot a_0 \odots a_n) \\
&= \phi(\a(a_{n+1})\ot \a(a_0) \odots a_{i-1}a_i\odots \a(a_n)) \\
& =-t_n\phi(\a(a_0) \odots a_{i-1}a_i\odots \a(a_{n+1})) \\
& =-\d_{i-1}(t_n\vp)(a_0 \odots a_{n+1}).
\end{split}
\end{align*}
\end{proof}

We call the homology of the complex $(C_{\lambda,Hom}^\ast(\Ac),b)$ the cyclic cohomology of the Hom-associative algebra $\Ac$, and we denote it by $HC^\ast_{Hom,\lambda}(\Ac)$.

\subsubsection{The cocyclic bicomplex for Hom-associative algebras}

In this subsection we dualize the bicomplex \eqref{aux-cyclic-bicomplex} to obtain a similar bicomplex whose total homology is the cyclic cohomology of a Hom-associative algebra $\Ac$.


Similar to the homology case, we set
\begin{align}\label{aux-b-prime-cohom}
\begin{split}
& b':C_{Hom}^n(\Ac, \Ac^*)\lra C_{Hom}^{n+1}(\Ac, \Ac^*),\\
& b'(\varphi)(a_0 \odots a_{n+1})= \sum_{i=0}^{n-1} (-1)^i\varphi(\a(a_0)\odots a_ia_{i+1}\odots \a(a_n)),
\end{split}
\end{align}
and
\begin{equation}\label{aux-norm-opt-cohom}
N:=\Id+t_n+\ldots+t_n^n:C_{Hom}^n(\Ac)\lra C_{Hom}^n(\Ac).
\end{equation}

\begin{lemma}
The operators \eqref{aux-tau-cohom}, \eqref{aux-b-prime-cohom} and \eqref{aux-norm-opt-cohom} satisfy the identities
\begin{equation*}
(\Id-t_{n+1})b=b'(\Id-t_n), \quad (\Id-t_n)N=N(\Id-t_n)=0,\quad \text{and} \quad Nb'=bN.
\end{equation*}
\end{lemma}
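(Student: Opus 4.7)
All three identities follow from the pre-cosimplicial cyclic relations
\begin{equation*}
t_{n+1}\delta_0 = (-1)^{n+1}\delta_{n+1}, \qquad t_{n+1}\delta_i = -\delta_{i-1}t_n \quad (1 \le i \le n+1),
\end{equation*}
which were essentially established in the proof that $C^\ast_{\lambda,Hom}(\Ac,\Ac^\ast)$ is a subcomplex of the Hochschild complex, and which are the cohomological duals of the relations used in the proof of the homology analog. The plan is to verify each of the three identities by expanding $b$, $b'$, and $N$ in terms of the $\delta_i$, applying these relations term-by-term, and re-indexing.

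The middle identity is immediate: iterating the definition of $t_n$ shows $t_n^{n+1} = \Id$ on $C_{Hom}^n(\Ac,\Ac^\ast)$, since after $n+1$ applications the accumulated sign is $(-1)^{n(n+1)}=1$ and the tensor factors return to their original order. Hence both $(\Id - t_n)N$ and $N(\Id - t_n)$ telescope to $\Id - t_n^{n+1} = 0$.

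For the first identity, I would write $b = b' + (-1)^{n+1}\delta_{n+1}$ and compute $t_{n+1}b$ term-by-term. The $i=0$ contribution is $(-1)^{n+1}\delta_{n+1}$ by the first relation above, while for $1\le i\le n+1$ the second relation together with the re-indexing $j=i-1$ produces $\sum_{j=0}^n (-1)^j \delta_j t_n = b't_n$. Thus $t_{n+1}b = (-1)^{n+1}\delta_{n+1} + b't_n$, so $(\Id - t_{n+1})b = b - (-1)^{n+1}\delta_{n+1} - b't_n = b' - b't_n = b'(\Id - t_n)$.

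The third identity $Nb' = bN$ will be the main obstacle. The strategy is to iterate the pre-cosimplicial relations to obtain a closed-form formula for $t_{n+1}^k\delta_i$: namely, $(-1)^k\delta_{i-k}t_n^k$ when $k\le i$, and $(-1)^{n+k}\delta_{n+2-k+i}t_n^{k-1}$ when $k>i$ (the second case arises when the iteration reaches $\delta_0$ and then wraps around via $t_{n+1}\delta_0=(-1)^{n+1}\delta_{n+1}$). Substituting into $Nb' = \sum_{k=0}^{n+1}\sum_{i=0}^n (-1)^i t_{n+1}^k\delta_i$ and splitting the double sum accordingly, one finds that for each fixed $k\in\{0,\ldots,n\}$ the case $k\le i$ produces $\sum_{j=0}^{n-k}(-1)^j\delta_jt_n^k$ while the case $k>i$ produces $\sum_{j=n+1-k}^{n+1}(-1)^j\delta_jt_n^k$. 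The key combinatorial observation is that the index ranges $\{0,\ldots,n-k\}$ and $\{n+1-k,\ldots,n+1\}$ partition $\{0,1,\ldots,n+1\}$, so their union contributes exactly $\sum_{j=0}^{n+1}(-1)^j\delta_j = b$ at each power $t_n^k$. Summing over $k$ from $0$ to $n$ yields $bN$. The main difficulty is careful bookkeeping of signs and the wrap-around step, but a low-dimensional check (e.g., $n=1$: $t_2^0 b' + t_2 b' + t_2^2 b' = (\delta_0 - \delta_1) + (\delta_0 t_1 + \delta_2) + (-\delta_1 t_1 + \delta_2 t_1) = b + bt_1 = bN_1$) confirms the pattern.
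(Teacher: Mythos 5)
Your proposal is correct and takes the route the paper intends: the lemma is stated there without proof, deferring to the coface/cyclic-operator relations $t_{n+1}\delta_0=(-1)^{n+1}\delta_{n+1}$ and $t_{n+1}\delta_i=-\delta_{i-1}t_n$ established in the preceding subcomplex lemma, and your term-by-term expansion of $b$, $b'$ and $N$ (including the closed form for $t_{n+1}^k\delta_i$ with wrap-around and the partition of the index set $\{0,\dots,n+1\}$ in the $Nb'=bN$ computation) is exactly the standard verification, confirmed by your $n=1$ check. The only remark worth making is that your signed relation for $t_{n+1}\delta_0$ is the one actually consistent with the paper's definition of $t_n$ (the paper's own displayed computation drops the factor $(-1)^n$), and your bookkeeping is consistent throughout.
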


Consequently, we have the bicomplex
\begin{equation*}
CC^{p,q}_{Hom}(\Ac):=\{\phi:\Ac^{\ot\,q+1}\lra k\mid \phi \,\,{\rm is\,\, linear}\}
\end{equation*}
as follows. Letting $C^q:=CC^{p,q}_{Hom}(\Ac)$,
\begin{align}\label{aux-cocyclic-bicomplex}
\begin{CD}
\vdots @.\vdots @.\vdots @.\\
C^2 @ >\Id-t_2 >> C^2  @ > N >> C^2 @ >\Id-t_2 >> \cdots  \\
@AAbA  @ AA- b'A @AA  b A  \\
C^1 @ >\Id-t_1 >> C^1  @ > N >> C^1 @ >\Id-t_1 >> \cdots \\
@AA  b A @AA- b'A @AA  bA  \\
C^0  @>\Id-t_0 >> C^0  @ >  N >> C^0 @ >\Id-t_0 >> \cdots
\end{CD}
\end{align}

We denote the total cohomology of the bicomplex \eqref{aux-cocyclic-bicomplex} by $HC^*_{\Hom}(\Ac)$. By the next proposition we show that the total cohomology is the same as the cyclic cohomology $HC^\ast_{Hom,\lambda}(\Ac)$ of the Hom-associative algebra $\Ac$.

\begin{proposition}
For any Hom-associative algebra $\Ac$ over a field $k$ containing the rational numbers, $HC^\ast_{Hom,\lambda}(\Ac)=HC^*_{Hom}(\Ac)$.
\end{proposition}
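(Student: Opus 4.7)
The proof should mirror that of Proposition \ref{prop-cyclic-homology}, dualized to the cohomological setting. The central task is to show that the rows of the bicomplex \eqref{aux-cocyclic-bicomplex} are acyclic in positive horizontal degrees. Each row is the periodic sequence
$$
C^q \xrightarrow{\Id-t_q} C^q \xrightarrow{N} C^q \xrightarrow{\Id-t_q} C^q \xrightarrow{N} \cdots,
$$
and the identity $t_q^{q+1}=\Id$ holds since $(-1)^{q(q+1)}=1$. Because $k\supseteq\Qb$, one constructs an explicit contracting homotopy built from $\frac{1}{q+1}\Id$ and $\frac{1}{q+1}\tau$, where $\tau$ is the linear combination of powers of $t_q$ satisfying $N+\tau(\Id-t_q)=(q+1)\Id$, exactly as in Proposition \ref{prop-cyclic-homology}. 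This contracts each row onto its leftmost term, on which the remaining cohomology is $\ker(\Id-t_q)=C^q_{\lambda,Hom}(\Ac,\Ac^\ast)$.

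Next I would filter \eqref{aux-cocyclic-bicomplex} by columns and compute the associated spectral sequence. By the row acyclicity just established, the $E_1$-page is concentrated in the zeroth column, namely
$$
E_1^{p,q}=\begin{cases} C^q_{\lambda,Hom}(\Ac,\Ac^\ast), & p=0, \\ 0, & p>0, \end{cases}
$$
and the induced vertical differential on $E_1^{0,q}$ is the Hochschild coboundary $b$, which restricts to the subcomplex $C^\ast_{\lambda,Hom}$ by the preceding lemma. Consequently $E_2^{0,q}=HC^q_{Hom,\lambda}(\Ac)$, the spectral sequence degenerates at this page, and it converges to the total cohomology $HC^\ast_{Hom}(\Ac)$. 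This yields the stated equality $HC^\ast_{Hom}(\Ac)=HC^\ast_{Hom,\lambda}(\Ac)$.

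The main obstacle is the row-acyclicity step---namely, pinning down the contracting homotopy with its correct signs, which is the sole place where the hypothesis $\Qb\subseteq k$ is used. Once the rows are known to be acyclic, the remainder is formal: the first-quadrant column filtration converges, and the identification of $E_2^{0,\ast}$ with $HC^\ast_{Hom,\lambda}(\Ac)$ reduces to the already-established fact that $b$ preserves cyclic invariants. Both ingredients are exact duals of what appears in the proof of Proposition \ref{prop-cyclic-homology}, so no essentially new input is required beyond transporting that argument through the linear duality $\Hom(-,k)$.
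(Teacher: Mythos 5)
Your proposal is correct and follows essentially the same route as the paper: row acyclicity via the contracting homotopy coming from $N+\tau(\Id-t_q)=(q+1)\Id$ (which is where $\Qb\subseteq k$ enters), followed by the column-filtration spectral sequence whose $E_1$-page is concentrated in $p=0$ where it equals $C^q_{\lambda,Hom}$, so that $E_2$ gives $HC^\ast_{Hom,\lambda}(\Ac)$ and the sequence degenerates. The only difference is that you spell out details (the identity $t_q^{q+1}=\Id$, convergence of the first-quadrant filtration) that the paper leaves implicit by referring back to Proposition \ref{prop-cyclic-homology}.
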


\begin{proof}
A similar argument as in the proof of Proposition \ref{prop-cyclic-homology} yields the acyclicity of the rows of the bicomplex \eqref{aux-cocyclic-bicomplex}. Thus, considering the spectral sequence associated to the filtration of the bicomplex \eqref{aux-cocyclic-bicomplex} via columns, we obtain
\begin{equation*}
E^1_{p,q}=\left\{\begin{array}{cc}
                   0, & {\rm if}\,\,p>0 \\
                   C_{Hom,\lambda}^q, & {\rm if} \,\,p=0
                 \end{array}
\right.
\end{equation*}
on the $E^1$-page. Therefore
\begin{equation*}
E^2_{p,q}=\left\{\begin{array}{cc}
                   0, & {\rm if}\,\,p>0 \\
                   H_{Hom,\lambda}^q, & {\rm if} \,\,p=0.
                 \end{array}
\right.
\end{equation*}
Then the claim follows since the spectral sequence degenerates at this level.
\end{proof}

In order to define the periodic cyclic cohomology we extend the bicomplex \eqref{aux-cocyclic-bicomplex} to
\begin{align*}\
\begin{CD}
\vdots @.\vdots @.\vdots @.\\
\cdots C^2 @ >\Id-t_2 >> C^2  @ > N >> C^2 @ >\Id-t_2 >> \cdots  \\
@AAbA  @ AA- b'A @AA  b A  \\
\cdots C^1 @ >\Id-t_1 >> C^1  @ > N >> C^1 @ >\Id-t_1 >> \cdots \\
@AA  b A @AA- b'A @AA  bA  \\
\cdots C^0  @>\Id-t_0 >> C^0  @ >  N >> C^0 @ >\Id-t_0 >> \cdots
\end{CD}
\end{align*}
and we call the homology of the total complex $CC^{p,q}_{Hom, per}(\Ac)$ of the extended bicomplex the periodic cyclic cohomology of the multiplicative Hom-associative algebra $\Ac$, and we denote it by $HP_{Hom}^\ast(\Ac)$. Just as the periodic cyclic homology, there are only two periodic cohomology groups, the even $HP_{Hom}^0(\Ac)$ and the odd $HP_{Hom}^1(\Ac)$.

\begin{remark}
{\rm
Similar to the homology case,  the Connes' $(B,b)$-bicomplex can not be defined for non-unital Hom-associative algebras. However, if  $(\Ac,\mu, \alpha)$ is a unital  Hom-associative algebra, via the cofaces
   \begin{align*}
&\delta_i: C^n_{Hom}(\Ac)\lra C^{n+1}_{Hom}(\Ac), \qquad {0 \leq i \leq n+1}\\
&\delta_i\varphi(a_0 \odots  a_{n+1}) = \varphi(\a(a_0)\odots a_i a_{i+1}\odots  \a(a_{n+1})), \quad {0 \leq i < n}\\
&\delta_{n+1}\varphi(a_0\odots a_{n+1})=    \varphi(a_{n+1} a_0\odots \a(a_1)\odots \a(a_n)),
\end{align*}
the codegeneracies,
   \begin{align*}
&\s_j: C^n_{Hom}(\Ac)\lra C^{n-1}_{Hom}(\Ac), \qquad {0 \leq i \leq n-1}\\
&\sigma_j \varphi(a_0\odots a_n)= \varphi(a_0\odots a_j \otimes 1 \otimes a_{j+1}\odots a_n),
\end{align*}
and the cyclic maps
   \begin{align*}
&t_n: C^n_{Hom}(\Ac)\lra C^n_{Hom}(\Ac),\\
&t_n \varphi(a_0\odots a_n)=\varphi(a_n\otimes a_0\odots a_{n-1}),
\end{align*}
we obtain a cocyclic module structure \cite{Loday-book}, via which we define the $(B,b)$-bicomplex.
}
\end{remark}

As for the functoriality of the cyclic cohomology for Hom-associative algebras, we have the following. The proof is similar to the case of homology, and hence is omitted.

\begin{proposition}
 Let $(\Ac, \a_{\Ac})$ and $(\Bc, \a_{\Bc})$ be two Hom-associative algebras. Then a morphism $f: \Ac\longrightarrow \Bc$ of Hom-associative algebras induces the maps
\begin{equation*}
f': HH_{Hom}^n(\Ac)\longrightarrow HH_{Hom}^n(\Bc), \qquad f'':HC_{Hom}^n(\Ac)\longrightarrow HC_{Hom}^n(\Bc),
\end{equation*}
 given by $a_0\ot \cdots \ot a_n\mapsto f(a_0)\ot \cdots \ot f(a_n)$. Furthermore if $\Ac$ and $\Bc$ are isomorphic as Hom-associative algebras then $HH_{Hom}^n(\Ac)\cong HH_{Hom}^n(\Bc)$ and $HC_{Hom}^n(\Ac)\cong HC_{Hom}^n(\Bc)$.
\end{proposition}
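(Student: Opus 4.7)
The proof parallels the template already used for the homological case. First I set things up: under the identification of $C^n_{Hom}(\Ac,\Ac^\ast)$ with linear functionals $\phi:\Ac^{\ot n+1}\lra k$ employed in the paper, the cochain map naturally associated to a morphism $f:\Ac\lra \Bc$ is the pullback $f^\ast(\phi):=\phi\circ f^{\ot n+1}$, where $f^{\ot n+1}$ is the tensor power $a_0\ot\cdots\ot a_n\mapsto f(a_0)\ot\cdots\ot f(a_n)$ appearing in the statement. This pullback is a priori contravariant; in the isomorphism case below one recovers the covariant direction asserted in the proposition via $(f^{-1})^\ast$.

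The first calculational step is to check that $f^\ast$ commutes with the Hochschild coboundary $b$. Expanding via the pre-cosimplicial formulas displayed earlier in Section 4, the faces involve only products of consecutive arguments $a_ia_{i+1}$ (at both the outer and the inner positions) and the twisting endomorphism $\a_\Ac$ applied to the remaining slots. Because $f$ is an algebra morphism we have $f(a_ia_{i+1})=f(a_i)f(a_{i+1})$, and because $f\circ\a_\Ac=\a_\Bc\circ f$ the twisted arguments transport correctly. Hence each face term for $\Ac$ is carried by $f^{\ot n+1}$ precisely to the corresponding face term for $\Bc$; summing with signs yields $b\circ f^\ast=f^\ast\circ b$, which gives the induced map on $HH_{Hom}^n$.

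For the cyclic statement I also need $t_n\circ f^\ast=f^\ast\circ t_n$. This is automatic because $t_n$ is a signed cyclic permutation of the tensor slots and $f^{\ot n+1}$ is equivariant for any permutation of slots. Consequently $f^\ast$ preserves the cyclic subcomplex $C^\ast_{\lambda,Hom}$ and descends to $HC_{Hom}^n$. I do not expect a genuine obstacle in the argument; it is entirely diagrammatic and uses only the two defining properties of a morphism of Hom-associative algebras. The only mild bookkeeping point is keeping track of the two interpretations (the covariant description on chains versus the contravariant pullback on cochains), which coincide under the functional identification.

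Finally, for the isomorphism part, applying the same construction to $f^{-1}$ produces a two-sided inverse of $f^\ast$ at the cohomological level, yielding the asserted isomorphisms $HH_{Hom}^n(\Ac)\cong HH_{Hom}^n(\Bc)$ and $HC_{Hom}^n(\Ac)\cong HC_{Hom}^n(\Bc)$.
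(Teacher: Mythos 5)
Your proof is correct and follows the same route the paper intends: the paper omits this proof, saying only that it is ``similar to the case of homology,'' where the recorded argument is exactly your two checks, namely that $f^{\ot n+1}$ commutes with the (co)boundary (using that $f$ is an algebra map intertwining $\a_{\Ac}$ and $\a_{\Bc}$, so that both the products $a_ia_{i+1}$ and the twisted slots $\a(a_j)$ transport correctly) and that it commutes with the cyclic operator $t_n$. Your additional remark that the induced map on cochains is a priori the contravariant pullback $f^\ast(\phi)=\phi\circ f^{\ot n+1}$, so that the covariant direction asserted in the statement is only literally available via $(f^{-1})^\ast$ in the isomorphism case, is a point the paper's statement glosses over, and making it explicit is an improvement rather than a gap.
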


We conclude this subsection by the following examples to illustrate the elementary computations on cyclic cohomology.

\begin{example}{\rm
     Let $(\Ac,\mu,  \a)$ be a Hom-associative algebra. Then
\begin{equation*}
  H_{Hom}^{0}(\Ac, \Ac^*)= HC_{Hom}^0(\Ac).
\end{equation*}
More precisely,  the cyclic (or Hochschild) $0$-cocycles  of $\Ac$ are traces on $\Ac$, \ie $k$-linear maps    $\varphi: \Ac\longrightarrow k$ such that $\varphi(xy)=\varphi(yx)$, for all $x,y\in \Ac$.
}
\end{example}

\begin{example}{\rm
Let $(\Ac,\mu,  \a)$ be a  Hom-associative algebra. Then the cyclic $1$-cocycles $\varphi: \Ac\ot \Ac\longrightarrow k$ are the Hochschild $1$-cocycles
\begin{equation*}
  \varphi(xy\ot \a(z))-\varphi(\a(x)\ot yz)+\varphi(zx\ot \a(y))=0,
\end{equation*}
which are cyclic
\begin{equation*}
  \varphi(x\ot y)=-\varphi(y\ot x),
\end{equation*}
for all $x,y,z\in \Ac$.
}
\end{example}

On the next example we discuss how to construct a cyclic $1$-cocycle out of a cyclic $0$-cocyle and an $\a$-derivation.

\begin{example}
  {\rm
Let $(\Ac,\mu,  \a)$ be a multiplicative Hom-associative algebra,  $\rho: \Ac\longrightarrow \Ac$ a twisted $\a$-derivation, \ie $\rho(ab)=\rho(a)b+ a\rho(b)$, where $\a\rho=\rho\a=\rho$.

Let also $tr: \Ac \longrightarrow k$ be a trace map, \ie $tr(ab)=tr(ba)$ and $tr(\rho(a))=0$.

We will show that  the map
\begin{equation*}
\varphi: \Ac\ot \Ac\longrightarrow k,\qquad \varphi(a,b)=tr(a\rho(b))
\end{equation*}
is a cyclic $1$-cocycle. We first show that $\varphi$ is a Hochshcild $1$-cocycle. Indeed,
\begin{align*}
 \varphi&(ab, \alpha(c))-\varphi(\alpha(a), bc)+ \varphi(ca, \alpha(b))\\
  &=tr(ab\rho( \alpha(c)))-tr(\alpha(a)\rho( bc))+ tr(ca\rho( \alpha(b)))\\
  &=tr(ab\rho( \alpha(c)))-tr(\alpha(a)\rho(b ) c))\\
  &=-tr(\a(a) b \rho(c))+ tr(ca\rho( \alpha(b))) = 0.
 \end{align*}
On the other hand,
\begin{equation*}
\vp(a,b)=tr(a\rho(b))=tr(\rho(ab)-b\rho(a)) = -tr(b\rho(a)) = -\vp(b,a),
 \end{equation*}
hence $\varphi$ is cyclic.
}
\end{example}

The final example is about the relation between the cyclic homologies of an associative algebra, and a Hom-asoociative algebra attached to it.

\begin{example}{\rm
Let $A$ be an associative algebra and $\alpha: A\longrightarrow A$ be  an algebra map where $\alpha^2=\alpha$. We then have the  non-unital Hom-associative algebra $(A_{\alpha}, \mu_{\alpha}, \alpha)$ of Example \ref{twist}.

Letting $\Ac:=A_\a$, we define the map
  \begin{equation}\label{aux-map-xi}
    \xi: C_{\lambda}^n(A)\longrightarrow C_{\lambda}^n(\Ac), \quad \varphi\longmapsto \varphi_{\alpha}
  \end{equation}
  where
\begin{equation*}
\varphi_{\alpha}(a_0\odots a_n)= \varphi(\alpha(a_0) \odots \alpha(a_n)).
\end{equation*}
Then it follows from
  \begin{align*}
   \delta_i \xi\varphi(a_0 \odots a_n)   &=\xi\varphi(\alpha(a_0) \odots \alpha(a_i a_{i+1}) \odots \alpha(a_n))\\
   &=\varphi(\alpha^2(a_0) \odots \alpha^2(a_i)\alpha^2( a_{i+1}) \odots \alpha^2(a_n))\\
   &=\varphi(\alpha(a_0) \odots \alpha(a_i)\alpha( a_{i+1}) \odots \alpha(a_n))\\
   &=(\delta_i \varphi)(\alpha(a_0) \odots  \a(a_n))\\
   &=\xi \delta_i \varphi(a_0 \odots a_n)
  \end{align*}
that the map \eqref{aux-map-xi} commutes with the coboundary map of the Hochschild complex $C^\ast_{Hom}(\Ac,\Ac^\ast)$, and hence induces a map
\begin{equation*}
H^\ast(A, A^\ast)\longrightarrow H^\ast_{Hom}(\Ac,\Ac^\ast).
\end{equation*}
We note similarly that
  \begin{align*}
    \varphi_{\a}(a_0 \odots  a_n)&= \varphi(\a(a_0) \odots \a(a_n))\\
    &= (-1)^n\varphi(\a(a_n)\ot \a(a_0) \odots \a(a_{n-1}))\\
    &=(-1)^n \varphi_{\a}(a_n\ot a_0 \odots a_{n-1}),
  \end{align*}
that is, \eqref{aux-map-xi} induces a map
\begin{equation*}
HC^\ast(A)\longrightarrow HC_{Hom}^\ast(\Ac)
\end{equation*}
on the level of cyclic cohomologies.
  }
\end{example}

\medskip

\textbf{Acknowledgments}:
The authors wish to thank Donald Yau for his encouragement to  investigate  the cyclic homology theory for Hom-associative algebras. We also thank Aron Gohr for the Lemma \ref{lemma-unital}. The first author thanks  the Institut des Hautes \'{E}tudes Scientifiques for its hospitality and financial support during his visit. Finally, the authors would like to thank the referee for his/her constructive comments improving the paper.

\bibliographystyle{amsplain}
\bibliography{references}{}

\def\polhk#1{\setbox0=\hbox{#1}{\ooalign{\hidewidth
  \lower1.5ex\hbox{`}\hidewidth\crcr\unhbox0}}} \def\cprime{$'$}
  \def\cprime{$'$} \def\cprime{$'$} \def\cprime{$'$} \def\cprime{$'$}
  \def\cprime{$'$} \def\cprime{$'$} \def\cprime{$'$} \def\cprime{$'$}
  \def\cprime{$'$} \def\cprime{$'$} \def\Dbar{\leavevmode\lower.6ex\hbox to
  0pt{\hskip-.23ex \accent"16\hss}D}
  \def\cfac#1{\ifmmode\setbox7\hbox{$\accent"5E#1$}\else
  \setbox7\hbox{\accent"5E#1}\penalty 10000\relax\fi\raise 1\ht7
  \hbox{\lower1.15ex\hbox to 1\wd7{\hss\accent"13\hss}}\penalty 10000
  \hskip-1\wd7\penalty 10000\box7}
  \def\cftil#1{\ifmmode\setbox7\hbox{$\accent"5E#1$}\else
  \setbox7\hbox{\accent"5E#1}\penalty 10000\relax\fi\raise 1\ht7
  \hbox{\lower1.15ex\hbox to 1\wd7{\hss\accent"7E\hss}}\penalty 10000
  \hskip-1\wd7\penalty 10000\box7} \def\cprime{$'$}
\providecommand{\bysame}{\leavevmode\hbox to3em{\hrulefill}\thinspace}
\providecommand{\MR}{\relax\ifhmode\unskip\space\fi MR }
\providecommand{\MRhref}[2]{%
  \href{http://www.ams.org/mathscinet-getitem?mr=#1}{#2}
}
\providecommand{\href}[2]{#2}
\begin{thebibliography}{10}

\bibitem{AkraMaji04}
S.~E. Akrami and S.~Majid, \emph{Braided cyclic cocycles and nonassociative
  geometry}, J. Math. Phys. \textbf{45} (2004), no.~10, 3883--3911.

\bibitem{AlekKosmSchwMein02}
A.~Alekseev, Y.~Kosmann-Schwarzbach, and E.~Meinrenken, \emph{Quasi-{P}oisson
  manifolds}, Canad. J. Math. \textbf{54} (2002), no.~1, 3--29.

\bibitem{AmmaEjbeMakh11}
F.~Ammar, Z.~Ejbehi, and A.~Makhlouf, \emph{Cohomology and deformations of
  {H}om-algebras}, J. Lie Theory \textbf{21} (2011), no.~4, 813--836.

\bibitem{BeggMaji06}
E.~J. Beggs and S.~Majid, \emph{Semiclassical differential structures}, Pacific
  J. Math. \textbf{224} (2006), no.~1, 1--44.

\bibitem{BenaMakh14}
S.~Benayadi and A.~Makhlouf, \emph{Hom-{L}ie algebras with symmetric invariant
  nondegenerate bilinear forms}, J. Geom. Phys. \textbf{76} (2014), 38--60.

\bibitem{BoylFarn14}
L.~Boyle and S.~Farnsworth, \emph{Non-commutative geometry, non-associative
  geometry and the standard model of particle physics}, New J. Phys.
  \textbf{16} (2014), 8 pp.

\bibitem{BoylFarn13}
\bysame, \emph{Non-associative geometry and the spectral action principle},
  arXiv:1303.1782, (2013).

\bibitem{ChamConn07}
A.~H. Chamseddine and A.~Connes, \emph{Conceptual explanation for the algebra
  in the noncommutative approach to the standard model}, Phys. Rev. Lett.
  \textbf{99} (2007), no.~19, 191601, 4.

\bibitem{ChamConn08}
\bysame, \emph{Why the standard model}, J. Geom. Phys. \textbf{58} (2008),
  no.~1, 38--47.

\bibitem{ChamConn12}
\bysame, \emph{Resilience of the spectral standard model}, J. High Energy Phys.
  (2012), no.~9, 104, front matter+10.

\bibitem{ChamConnMarc07}
A.~H. Chamseddine, A.~Connes, and M.~Marcolli, \emph{Gravity and the standard
  model with neutrino mixing}, Adv. Theor. Math. Phys. \textbf{11} (2007),
  no.~6, 991--1089.

\bibitem{Conn83}
A.~Connes, \emph{Cohomologie cyclique et foncteurs {${\rm Ext}^n$}}, C. R.
  Acad. Sci. Paris S\'er. I Math. \textbf{296} (1983), no.~23, 953--958.

\bibitem{Conn85}
\bysame, \emph{Noncommutative differential geometry}, Inst. Hautes \'Etudes
  Sci. Publ. Math. (1985), no.~62, 257--360.

\bibitem{FregGohr09}
Y.~Fregier and A.~Gohr, \emph{On unitality conditions for hom-associative
  algebras}, arXiv:0904.4874, (2009).

\bibitem{Gers64}
M.~Gerstenhaber, \emph{On the deformation of rings and algebras}, Ann. of Math.
  (2) \textbf{79} (1964), 59--103.

\bibitem{Gohr10}
A.~Gohr, \emph{On hom-algebras with surjective twisting}, J. Algebra
  \textbf{324} (2010), no.~7, 1483--1491.

\bibitem{HartLarsSilv06}
J.~T. Hartwig, D.~Larsson, and S.~D. Silvestrov, \emph{Deformations of {L}ie
  algebras using {$\sigma$}-derivations}, J. Algebra \textbf{295} (2006),
  no.~2, 314--361.

\bibitem{HellMakhSilv14}
L.~Hellstr{\"o}m, A.~Makhlouf, and S.~D. Silvestrov, \emph{Universal algebra
  applied to hom-associative algebras, and more}, Algebra, geometry and
  mathematical physics, Springer Proc. Math. Stat., vol.~85, Springer,
  Heidelberg, 2014, pp.~157--199.

\bibitem{Herstein-book}
I.~N. Herstein, \emph{Noncommutative rings}, Carus Mathematical Monographs,
  vol.~15, Mathematical Association of America, Washington, DC, 1994, Reprint
  of the 1968 original, With an afterword by Lance W. Small.

\bibitem{KustMurpTuse03}
J.~Kustermans, G.~J. Murphy, and L.~Tuset, \emph{Differential calculi over
  quantum groups and twisted cyclic cocyles}, J. Geom. Phys. \textbf{44}
  (2003), no.~4, 570--594.

\bibitem{LarsSilv05}
D.~Larsson and S.~D. Silvestrov, \emph{Quasi-hom-{L}ie algebras, central
  extensions and 2-cocycle-like identities}, J. Algebra \textbf{288} (2005),
  no.~2, 321--344.

\bibitem{Loday-book}
J.~L. Loday, \emph{Cyclic homology}, second ed., Grundlehren der Mathematischen
  Wissenschaften [Fundamental Principles of Mathematical Sciences], vol. 301,
  Springer-Verlag, Berlin, 1998, Appendix E by Maria O. Ronco, Chapter 13 by
  the author in collaboration with Teimuraz Pirashvili.

\bibitem{MajiOeck99}
S.~Majid and R.~Oeckl, \emph{Twisting of quantum differentials and the {P}lanck
  scale {H}opf algebra}, Comm. Math. Phys. \textbf{205} (1999), no.~3,
  617--655.

\bibitem{MakhSilv09}
A.~Makhlouf and S.~Silvestrov, \emph{Hom-{L}ie admissible {H}om-coalgebras and
  {H}om-{H}opf algebras}, Generalized {L}ie theory in mathematics, physics and
  beyond, Springer, Berlin, 2009, pp.~189--206.

\bibitem{MakhSilv10-II}
\bysame, \emph{Hom-algebras and {H}om-coalgebras}, J. Algebra Appl. \textbf{9}
  (2010), no.~4, 553--589.

\bibitem{MakhSilv10}
\bysame, \emph{Notes on 1-parameter formal deformations of {H}om-associative
  and {H}om-{L}ie algebras}, Forum Math. \textbf{22} (2010), no.~4, 715--739.

\bibitem{MakhSilv08}
A.~Makhlouf and S.~D. Silvestrov, \emph{Hom-algebra structures}, J. Gen. Lie
  Theory Appl. \textbf{2} (2008), no.~2, 51--64.

\bibitem{NijeRich66}
A.~Nijenhuis and R.~W. Richardson, Jr., \emph{Cohomology and deformations in
  graded {L}ie algebras}, Bull. Amer. Math. Soc. \textbf{72} (1966), 1--29.

\bibitem{Tsyg83}
B.~L. Tsygan, \emph{Homology of matrix {L}ie algebras over rings and the
  {H}ochschild homology}, Uspekhi Mat. Nauk \textbf{38} (1983), no.~2(230),
  217--218.

\bibitem{Weib-book}
C.~A. Weibel, \emph{An introduction to homological algebra}, Cambridge Studies
  in Advanced Mathematics, vol.~38, Cambridge University Press, Cambridge,
  1994.

\bibitem{Wulk99-II}
R.~Wulkenhaar, \emph{Gauge theories with graded differential {L}ie algebras},
  J. Math. Phys. \textbf{40} (1999), no.~2, 787--794.

\bibitem{Wulk99}
\bysame, \emph{{${\rm SO}(10)$} unification in noncommutative geometry
  revisited}, Internat. J. Modern Phys. A \textbf{14} (1999), no.~4, 559--588.

\bibitem{Wulk96}
\bysame, \emph{The mathematical footing of non-associative geometry},
  arXiv:hep-th/9607094, (1996).

\bibitem{Yau09}
D.~Yau, \emph{Hom-algebras and homology}, J. Lie Theory \textbf{19} (2009),
  no.~2, 409--421.

\bibitem{Yau10}
\bysame, \emph{Hom-bialgebras and comodule {H}om-algebras}, Int. Electron. J.
  Algebra \textbf{8} (2010), 45--64.

\bibitem{Yau12}
\bysame, \emph{Hom-quantum groups: {I}. {Q}uasi-triangular {H}om-bialgebras},
  J. Phys. A \textbf{45} (2012), no.~6, 065203, 23.

\end{thebibliography}
\medskip

\noindent Department of Mathematics and Statistics,
University of Windsor, 401 Sunset Avenue, Windsor, Ontario N9B 3P4, Canada

\noindent\emph{E-mail address}:
\textbf{mhassan@uwindsor.ca}
\medskip

\noindent Department of Mathematics and Statistics,
University of Windsor, 401 Sunset Avenue, Windsor, Ontario N9B 3P4, Canada

\noindent\emph{E-mail address}:
\textbf{ishapiro@uwindsor.ca}
\medskip

\noindent

\noindent\emph{E-mail address}:
\textbf{serkansutlu@gmail.com}
\end{document}